\newtheorem{thm}{Theorem}[section]
\newtheorem{cor}[thm]{Corollary}
\newtheorem{prop}[thm]{Proposition}
\newtheorem{clm}[thm]{Claim}
\theoremstyle{definition}
\newtheorem{ex}[thm]{Example}
\theoremstyle{definition}
\newtheorem{defn}[thm]{Definition}
\theoremstyle{definition}
\newtheorem{rem}[thm]{Remark}
\theoremstyle{definition}
\def\Q{\mathbb Q}
\def\C{\mathbb C}
\def\dim{\operatorname{dim}}
\def\O{\mathcal O}
\def\m{\mathbf m}
\def\({\left(}
\def\){\right)}
\def\geq{\geqslant}
\def\leq{\leqslant}
\newcommand{\smallfrac}[2]{{\textnormal{\small$\frac{#1}{#2}$}}}
\newcommand{\tpitchfork}{%
  \vbox{
    \baselineskip\z@skip
    \lineskip-.52ex
    \lineskiplimit\maxdimen
    \m@th
    \ialign{##\crcr\hidewidth\smash{$-$}\hidewidth\crcr$\pitchfork$\crcr}
  }%
}
\begin{document}

\title[Quasihomogeneous Functions on Analytic Varieties]{Bruce-Roberts Numbers and Quasihomogeneous Functions on Analytic Varieties}


\author{C. Bivi\`a-Ausina}
\address{
Institut Universitari de Matem\`atica Pura i Aplicada,
Universitat Polit\`ecnica de Val\`encia,
Cam\'i de Vera, s/n,
46022 Val\`encia, Spain}
\email{carbivia@mat.upv.es}
\thanks{The work of the first author was supported by Grant PID2021-124577NB-I00 funded by MCIN/AEI/10.13039/501100011033 and by ``ERDF A way of making Europe".}

\author{K. Kourliouros}
\address{Imperial College London, Department of Mathematics, 
180 Queen's Gate, South Kensington Campus, London SW7 2AZ, United Kingdom}
\email{k.kourliouros@gmail.com}
\thanks{The work of the second author was partially supported by the UK EPSRC [EP/W009455/1], and also by the Research Foundation of S\~ao Paulo (FAPESP) Grant No. 2017/2355-9.}

\author{M. A. S. Ruas}
\address{
Instituto de Ci\^encias Matem\'aticas e de Computa\c{c}\~ao,
Universidade de S\~ao Paulo,
Av. Trabalhador S\~ao-carlense, 400,
13566-590 S\~ao Carlos, SP, Brazil}
\email{maasruas@icmc.usp.br}
\thanks{The work of the third author was partially supported by FAPESP Proc. 2019/21181-0 and CNPq Proc. 305695/2019-3.}


\date{}



\begin{abstract}
Given a germ of an analytic variety $X$ and a germ of a holomorphic function $f$ with a stratified isolated singularity with respect to the logarithmic stratification of $X$, we show that under certain  conditions on the singularity type of the pair $(f,X)$, the following relative analog of the well known K. Saito's theorem holds true: equality of the relative Milnor and Tjurina numbers of $f$ with respect to $X$ (also known as Bruce-Roberts numbers) is equivalent to the relative quasihomogeneity of the pair $(f,X)$, i.e. to the existence of a coordinate system such that both $f$ and $X$ are quasihomogeneous with respect to the same positive rational weights. 
\end{abstract}

\maketitle


\bibliographystyle{amsplain}



\section{Introduction-Main Results}
\label{sec:1}

Given a holomorphic function germ $f:(\C^n,0)\rightarrow (\C,0)$ with an isolated singularity at the origin, its Milnor number is  classically defined as
\[\mu(f)=\dim_{\mathbb{C}}\frac{\O_n}{df(\Theta)}\]
where $\O_n$ is the ring of holomorphic function germs at the origin of $\C^n$, $\Theta \cong \O_n^n$ is the module of germs of vector fields (derivations),  and $df(\Theta)=J(f)$ is the ideal generated by the partial derivatives of $f$. By Milnor's theorem (c.f. \cite{Mil}, \cite{Pal}) this is exactly the rank of the middle homology group of the Milnor fiber of $f$, equal to the number of spheres in its bouquet decomposition.

Along with the Milnor number, one also defines the Tjurina number of $f$  
\[\tau(f)=\dim_{\mathbb{C}}\frac{\O_n}{df(\Theta)+\langle f\rangle}\]
 (where $\langle f\rangle\subset \O_n$ is the ideal generated by $f$), which is interpreted as the dimension of the base of a semi-universal deformation of the isolated hypersurface singularity  $Y=f^{-1}(0)$ defined by $f$.  

By definition $\mu(f)\geq \tau(f)$, and according to a well known theorem of K. Saito \cite{Sa1}, equality $\mu(f)=\tau(f)$ is equivalent to the quasihomogeneity of $f$, i.e. to the existence of a coordinate system $x=(x_1,\dots,x_n)$ and a vector of positive rational numbers $w=(w_1,\dots,w_n)\in \Q^n_+$ (the weights), such that $f$ can be written as
\[f(x)=\sum_{\langle w,m\rangle=1} a_mx^m\]
where $a_m\in \C$ and $x^m:=x_1^{m_1}\dots x_n^{m_n}$ are those monomials in the expansion of $f$ whose exponents $m=(m_1,\dots,m_n)\in \mathbb{N}^n$ belong to the affine hyperplane
\[\langle w,m\rangle=\sum_{i=1}^nw_im_i=1.\]
Notice that positivity of the weights $w_i$, $i=1,\dots,n$, forces $f$ to be polynomial in the above coordinates. 

After Saito's proof, there have been many other characterizations of quasihomogeneity of isolated hypersurface singularities, relating it with other invariants of the singularity ( c.f. \cite{Yau} and references therein). Moreover, Saito's result has been generalised for the case of isolated complete intersection singularities of positive dimension ({\textsc{icis}} for short) $X\subset (\C^n,0)$ by H. Vosegaard \cite{Vo}, a problem of substantial difficulty and of rather long history (c.f. \cite{Gr}, \cite{Gr0} for curves, \cite{Wahl} for surfaces and \cite{Vo1} for purely elliptic {\textsc{icis}} of dimension $\geq 2$): given an {\textsc{icis}} germ $X=h^{-1}(0)$, $h\in \O_n^m$, $n\geq m+1$, equality of its Milnor number $\mu(h)$ (i.e. the rank of the middle homology group of the Milnor fiber of $h$, c.f. \cite{Gr1}, \cite{Ha} and \cite{Looijenga}) and its Tjurina number $\tau(h)$ (i.e. the dimension of the base of a semi-universal deformation of $X$, c.f. \cite{Gr2} and \cite{Looijenga}), is equivalent to the quasihomogeneity of $X$, i.e. to the existence of a coordinate system $x=(x_1,\dots,x_n)$ and a vector of positive rational numbers $w=(w_1,\dots,w_n)\in \Q^n_+$,  such that the ideal of functions $I_X$ vanishing on $X$ admits a system of quasihomogeneous generators with respect to the weights $w$, that is
\[I_X=\langle h_1,\dots,h_m\rangle\]
\[h_i(x)=\sum_{\langle w,s\rangle=d_i} b_{s,i}x^s, \hspace{0.2cm} i=1,\dots,m\]
where $b_{s,i}\in \C$, and $d_i\in \Q_+$ are the quasihomogeneity degrees of the $h_i$'s. 

In contrast to Saito's proof, Vosegaard's proof is highly non-trivial, since the difference $\mu(h)-\tau(h)$ of the corresponding Milnor and Tjurina numbers of an {\textsc{icis}} $X=h^{-1}(0)$ admits no simple expression, as in the hypersurface case, but involves instead several invariants coming from the mixed Hodge structure of the link $X\setminus \{0\}$, and the resolution of singularities of $X$ (c.f. \cite{LS} and \cite{Vo}).

Away from the {\textsc{icis}} case, the invariant characterization of quasihomogeneity for general analytic varieties $X\subset (\C^n,0)$ is problematic, at least in terms of numerical invariants generalising the Milnor and Tjurina numbers, which typically cease to exist (i.e. they are not finite). In fact, the simplest  possible characterization of quasihomogeneity of analytic sets, involves the module of the so-called logarithmic vector fields (as defined by K. Saito \cite{Sa2})
\[\Theta_X:=\big \{\delta \in \Theta : \delta (I_X)\subseteq I_X\big \}\]
i.e. those vector fields which are tangent to the smooth part of $X$. In particular, one may easily show (c.f. \cite{DJZ} and also Theorem \ref{log-pd} in Section \ref{sec:3} of the present paper) that the variety $X$ is quasihomogeneous in an appropriate coordinate system, if and only if there exists a logarithmic vector field $\delta \in \Theta_X$ which vanishes at the origin, $\delta(0)=0$, and has positive rational eigenvalues
\[\text{sp}(\delta)=w=(w_1,\dots,w_n)\in \Q^n_+\]
where we denote by $\text{sp}(\delta)$ the spectrum of $\delta$, i.e. the set of eigenvalues of its linear part $j^1\delta$, viewed as a linear operator in $\C^n$. In fact, both Saito's and Vosegaard's proof, rely on the fact that in case where $X$ is an isolated hypersurface singularity, or an {\textsc{icis}} respectively, then equality of the corresponding Milnor and Tjurina numbers provides exactly such a logarithmic vector field with the required positivity property on its eigenvalues.

In the present paper we give another generalisation of Saito's theorem which interpolates between the above cases and is relevant in relative singularity theory. We consider pairs $(f,X)$ where $X\subset (\C^n,0)$ is an arbitrary analytic variety, and $f\in \O_n$ is a function germ which has a stratified isolated singularity at the origin (in the sense of L\^e \cite{Le2}) with respect to the logarithmic stratification of $X$, as defined by K. Saito (c.f. \cite{Sa2} for the case of hypersurfaces, and also \cite{BR} for the more general case of arbitrary analytic varieties $X$). 

The number that naturally generalises the Milnor number in this situation is the following relative Milnor number
\[\mu_X(f)=\dim_{\C}\frac{\O_n}{df(\Theta_X)}\]
where $df(\Theta_X)=J_X(f)$ is the ideal generated by the (Lie) derivatives of $f$ along logarithmic vector fields of $X$. This number has been consider by many authors, starting probably from the works of V. I. Arnol'd \cite{A1} in the case where $X$ is a smooth divisor, by O. V. Lyashko \cite{Ly} in the case where $X$ is an isolated hypersurface singularity, and later on by J. W. Bruce and R. M. Roberts \cite{BR} for the more general case of arbitrary analytic varieties $X$ (and many others which is impossible to cite). Recently (c.f. \cite{ART2012}, \cite{NOT2011}, \cite{BiviaRuas}, \cite{Gru}, \cite{Kour}, \cite{NOPT2020}-\cite{LNOTarxiv}, \cite{NT1}) it has been called the {\it Bruce-Roberts Milnor number} of $(f,X)$ (or of $f$ with respect to $X$). We adopt this terminology here as well.

We remark (c.f \cite[pp. 64]{BR}) that finiteness of the Bruce-Roberts Milnor number $\mu_X(f)<\infty$ is equivalent to the finite $\mathcal{R}_X$-determinacy of the function $f$ (i.e. finite determinacy under diffeomorphisms preserving the variety $X$), and to the existence of an $\mathcal{R}_X$-versal deformation for $f$ as well. As it was mentioned earlier, it is also equivalent to the function $f$ being a submersion on each logarithmic stratum of $X$, except possibly at the origin. 

In analogy with the Bruce-Roberts Milnor number, one may also define the {\it Bruce-Roberts Tjurina number} of the pair $(f,X)$ (c.f. \cite{Ah2} and \cite{BiviaRuas})
\[\tau_X(f)=\dim_{\C}\frac{\O_n}{df(\Theta_X)+\langle f\rangle}\]
which encodes the infinitesimal deformations of $f$ under $\mathcal{K}_X$-equivalence, i.e. under diffeomorphisms preserving $X$ and multiplication of $f$ by units in $\O_n$. In contrast to the Bruce-Roberts Milnor number, the properties of the Bruce-Roberts Tjurina number are much less studied in the literature, with an exception being the recent work  \cite{BiviaRuas} of the first and third authors, where they show, inspired by a result of Y. Liu \cite{Liu}, that the quotient $\mu_X(f)/\tau_X(f)$ is always bounded from above by the smallest integer $r$ such that $f^r\in df(\Theta_X)$. 

The present paper is motivated by the natural question as to characterise those pairs $(f,X)$ for which $r=1$, i.e. such that the equality $\mu_X(f)=\tau_X(f)$ holds. By definition, this is equivalent to $f\in df(\Theta_X)$, i.e. to the existence of a logarithmic vector field $\delta \in \Theta_X$ such that $\delta(f)=f$. Notice that by the obvious inclusion $\Theta_X\subset \Theta$, equality $\mu_X(f)=\tau_X(f)$ immediately implies $\mu(f)=\tau(f)$, and thus by Saito's theorem \cite{Sa1}, we know that there always exist a coordinate system such that $f$ can be reduced to a quasihomogeneous polynomial with positive rational weights. Despite this fact, we don't know if in these coordinates the variety $X$ will also be quasihomogeneous with respect to the same weights, or if it will be quasihomogeneous at all, and this is exactly the problem that we want to address here. The following definition will be useful throughout the paper.

\begin{defn}
\label{def-rel-quasi-1}
A pair $(f,X)$ in $(\C^n,0)$ will be called relatively quasihomogeneous if there exists a vector of positive rational numbers $w=(w_1,\dots,w_n)\in \Q_+^n$, a system of coordinates $x=(x_1,\dots,x_n)$ and a system of generators $\langle h_1,\dots,h_m\rangle=I_X$ of the ideal of functions vanishing on $X$, such that
\begin{align*}
f(x) &=\sum_{\langle w,m\rangle=1}a_mx^m, \hspace{0.2cm} a_m\in \C  \\
h_i(x) &=\sum_{\langle w,m\rangle=d_i}b_{m,i}x^m, \hspace{0.2cm} b_{m,i}\in \C,\hspace{0.2cm}  i=1,\dots,m
\end{align*}
where each $d_i \in \mathbb{Q}_+$ is the quasihomogeneous degree of $h_i$.
\end{defn}


As it is obvious from the definition, if $(f,X)$ is a relatively quasihomogeneous pair then there exists an Euler vector field $\chi_w=\sum_{i=1}^nw_ix_i\partial_{x_i}$ with positive rational eigenvalues, $w=(w_1,\dots,w_n)\in \Q^n_+$, such that 
\[\chi_w(f)=f, \quad \chi_w\in \Theta_X.\]
In particular, if $\mu_X(f)<\infty$, then equality $\mu_X(f)=\tau_X(f)$ trivially holds. So it is natural to ask about the validity of the converse implication, i.e. 

\medskip

\noindent ``{\it Does $\mu_X(f)=\tau_X(f)$ imply the relative quasihomogeneity of the pair $(f,X)$?"} 

\medskip

The answer to this relative analog of Saito's theorem is in general negative without further assumptions on the singularity type of the pair $(f,X)$, as one may construct several counter-examples of the following type.

\begin{ex}\label{counter}
Let $f(x,y,z)=x$ and $X=\{xy^3z^3+y^5+z^5=0\}$. Then $\mu_X(f)=\tau_X(f)=1$, and the pair $(f,X)$ is not relatively quasihomogeneous in this coordinate system, and in fact in any coordinate system (since $X$ itself cannot be made quasihomogeneous in any coordinate system). To see this, one may compute a system of generators of the module $\Theta_X\cap \Theta_Y$ and notice that the linear part of any vector field in this module is necessarily a constant multiple of the Euler vector field
\[\chi_{(-1,\smallfrac{1}{5},\smallfrac{1}{5})}=-x\partial_x+\frac{1}{5}y\partial_y+\frac{1}{5}z\partial_z.\]
Since the eigenvalues of any vector field are invariant under changes of coordinates, it follows that there cannot exist another Euler vector field in $\Theta_X\cap \Theta_Y$ with positive rational eigenvalues, which proves the claim (see Corollary \ref{cor-pd} in Section \ref{sec:3}). 
\end{ex}

Despite this fact, there is a wide class of singularities $(f,X)$ where the relative Saito theorem does indeed hold true. Our main results in this direction can be summarised in the following 

\begin{thm}
\label{thm-main}
Let $(f,X)$ be a pair with $\mu_X(f)<\infty$. Suppose also that
\begin{itemize}
\item[(a)] $f\in \m^3$, or
\item[(b)] $X$ is a hypersurface with at most an isolated singularity at the origin.
\end{itemize}
Then, equality $\mu_X(f)=\tau_X(f)$ is equivalent to the relative quasihomogeneity of the pair $(f,X)$.
\end{thm}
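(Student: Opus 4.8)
The plan is to reduce everything to producing one well-chosen relative Euler field and then to invoke the infinitesimal characterization of relative quasihomogeneity. One implication needs neither hypothesis and is already observed above: if $(f,X)$ is relatively quasihomogeneous, the Euler field $\chi_w\in\Theta_X$ satisfies $\chi_w(f)=f$, so $f\in df(\Theta_X)$, whence $df(\Theta_X)+\langle f\rangle=df(\Theta_X)$ and $\mu_X(f)=\tau_X(f)$. For the converse, $\mu_X(f)=\tau_X(f)$ is equivalent to $f\in df(\Theta_X)$, i.e.\ to the existence of $\delta\in\Theta_X$ with $\delta(f)=f$. Writing $\Theta_X^{f}:=\{\eta\in\Theta_X:\eta(f)=0\}$, any $\delta+\eta$ with $\eta\in\Theta_X^{f}$ is again logarithmic and still satisfies $(\delta+\eta)(f)=f$; and by Theorem~\ref{log-pd} together with Corollary~\ref{cor-pd} it suffices to choose $\eta$ so that $\delta+\eta$ vanishes at the origin and has spectrum in $\Q^n_+$, since that corollary then turns $\delta+\eta$ into an explicit coordinate system in which $f$ and a set of generators of $I_X$ become simultaneously quasihomogeneous for the same positive rational weights. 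Thus the whole proof is a normalization of $\delta$ modulo $\Theta_X^{f}$.

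First I would arrange $\delta(0)=0$. From $df(\Theta_X)\subseteq df(\Theta)=J(f)$ and $\mu_X(f)<\infty$ we get $\mu(f)<\infty$, so $f$ has an isolated critical point. If $f\in\m^2$ — which holds in case (a), as $\m^3\subset\m^2$ — then $Y:=f^{-1}(0)$ has an isolated singularity at the origin, and since $\delta(f)=f$ gives $f\circ\varphi_t=e^tf$ for the flow $\varphi_t$ of $\delta$, this flow preserves $Y$, hence preserves $\operatorname{Sing}(Y)=\{0\}$, forcing $\delta(0)=0$. In case (b): if $0\in\operatorname{Sing}(X)$ then the flow of $\delta\in\Theta_X$ preserves $X$ and hence $\operatorname{Sing}(X)=\{0\}$, again forcing $\delta(0)=0$; and if $X$ is smooth at the origin, then $X$ is a smooth divisor and the assertion reduces to the classical case of boundary singularities, which I would handle separately by straightening $X$ and running Saito's argument relative to the hyperplane. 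So from now on $\delta(0)=0$ and $A:=j^1\delta$ is defined.

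Next I would establish that the eigenvalues of $A$ are rational. Since $\mu_X(f)=\tau_X(f)$ forces $\mu(f)=\tau(f)$, Saito's theorem makes $f$, after a linear change of coordinates, a quasihomogeneous polynomial with a unique positive rational weight vector $w$ and Euler field $\chi_w$, and in these coordinates $\delta-\chi_w$ annihilates $f$. Analyzing the action of the semisimple part $A_s$ of $A$ on the support of $f$ — which affinely spans, because $\mu(f)<\infty$ — through the relation $\delta(f)=f=\chi_w(f)$ then forces $\operatorname{sp}(A)\subset\Q^n$; and it is here that finiteness of $\mu_X(f)$, not merely of $\mu(f)$, enters, since it is what allows the comparison of $A_s$ with $\chi_w$ to be carried out by an element of $\Theta_X$, making the spectrum we are after intrinsic to the pair.

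The genuine obstacle is \emph{positivity} of $\operatorname{sp}(A)$: one must exclude a sign-changing relative Euler field, precisely the phenomenon realized in Example~\ref{counter}, where $f$ is a submersion and $X$ has non-isolated singularities. This is where (a) and (b) enter, along two different lines. Under (a), $f\in\m^3$: I expect to show that the $\m^3$ hypothesis together with the isolated singularity rigidifies $A$, forcing $A_s$ to be conjugate to the (positive, rational) weight operator of $f$, leaving no room for a non-positive eigenvalue — the escape route used in Example~\ref{counter} requires $f$ to contain a monomial of degree $\le 2$. Under (b), $X=g^{-1}(0)$ with $g$ having at most an isolated singularity: I would play $f$ and $g$ against each other, using $\delta(0)=0$, $\delta(g)\in\langle g\rangle$ and the smallness of $\operatorname{Sing}(X)$ to force, via a Saito-type argument applied to $g$, both that $g$ is quasihomogeneous with respect to $\operatorname{sp}(A)$ and that $\operatorname{sp}(A)\subset\Q^n_+$, which together with the previous step exhibits common positive rational weights for $f$ and $g$. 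In either case, feeding the resulting normalized relative Euler field into Theorem~\ref{log-pd}/Corollary~\ref{cor-pd} produces the coordinate system witnessing relative quasihomogeneity of $(f,X)$. The hard part throughout is this positivity step, together with verifying that hypotheses (a) and (b) exhaust the ways it can be salvaged.
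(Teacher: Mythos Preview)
Your overall strategy---produce $\delta\in\Theta_X$ with $\delta(f)=f$, arrange $\delta(0)=0$, and then show $\text{sp}(\delta)\in\Q_+^n$ so that Corollary~\ref{cor-pd} applies---is exactly the paper's, and your treatment of case~(a) is essentially the argument given there (closest to the variant in Remark~\ref{AlekKer}): once $\chi_w(f)=f$ in Saito's coordinates, $\delta-\chi_w$ annihilates $f$, hence lies in $H_Y$; since $f\in\m^3$ forces $j^1\eta=0$ for every $\eta\in H_Y$, one gets $\text{sp}(\delta)=w\in\Q_+^n$ immediately. The freedom to modify by $\Theta_X^f$ is never actually used, and your paragraph on rationality is muddled: the comparison $\delta-\chi_w\in H_Y$ requires nothing beyond $\delta(f)=\chi_w(f)$, so finiteness of $\mu_X(f)$ plays no role at that step.

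The genuine gap is in~(b). Your plan (``play $f$ and $g$ against each other'', ``a Saito-type argument applied to $g$'') does not engage with the real obstruction. In the hardest sub-case $f\in\m^2\setminus\m^3$, $h\in\m^2$, the module $H_Y$ contains vector fields with nonzero linear part, so $\text{sp}(\delta)$ can genuinely differ from the Saito weights of $f$; and Saito's theorem applied to $h$ only produces \emph{some} positive weight system for $h$, not the specific one $\text{sp}(\delta)$ handed to you by $\delta$. The paper closes this by a direct contradiction: assuming $\text{sp}(\delta)=w'$ has $k\geq 1$ nonpositive entries, it uses \cite[Korollar~1.9, Lemma~1.10]{Sa1} to split $f=\sum_{i=1}^k x_iy_i+f_0(z)$, shows that $h$ must then be weighted homogeneous of the same degree~$1$ (or the symmetric form if $2k=n$), and computes, via $df(\Theta_X)=\langle f\rangle+J(f,h)$ from Remark~\ref{Kourrem}, that $V(df(\Theta_X))$ contains a curve---contradicting $\mu_X(f)<\infty$. \emph{This} is where finiteness of $\mu_X(f)$ genuinely enters. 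The remaining sub-cases of~(b), where one of $f,h$ lies in $\m\setminus\m^2$, are handled by straightening the submersion and applying Saito to the restriction of the other function, together with the symmetry $\mu_X(f)-\tau_X(f)=\mu_Y(h)-\tau_Y(h)$ of Proposition~\ref{lemaB}; your sketch gestures at the smooth-$X$ boundary case but neither isolates the sub-case $f\in\m\setminus\m^2$, $h\in\m^2$ nor invokes this symmetry.
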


The proof of the theorem is a variant of Saito's original proof and will be given in Section \ref{sec:4} with a more precise statement for the range of the weights in each case, related to the multiplicity of $f$. In particular, part (a) of Theorem \ref{thm-main} corresponds to Theorem \ref{thm-main1}, and part (b) to Theorem \ref{thm-main2} respectively. 

The main ingredients of the proof are, apart from Saito's theorem itself, several decomposition formulas for the difference 
\[\mu_X(f)-\tau_X(f)=\dim_{\C}\frac{df(\Theta_X)+\langle f\rangle}{df(\Theta_X)}\] 
presented in Section \ref{sec:2}, as well as an invariant characterization of relative quasihomogeneity in terms of logarithmic vector fields presented in Section \ref{sec:3}. The latter allows us to detect the relative quasihomogeneity of pairs $(f,X)$ by looking solely at the possible eigenvalues of logarithmic vector fields in the intersection $\Theta_X\cap \Theta_Y$.  

We remark finally that one may produce a whole class of counter-examples to the implication ``$\mu_X(f)=\tau_X(f)\Longrightarrow (f,X)$ {\it is relatively quasihomogeneous}", generalising Counter-Example \ref{counter} given above; it consists of pairs $(f,X)$ where $X$ is a non-isolated hypersurface singularity defining an equisingular deformation of a quasihomogeneous isolated singularity $X_0\subset (\C^{n-1},0)$, and $f$ is a linear form corresponding to the deformation parameter. We were not able though to find any other counter-examples away from this class, and in this sense, the validity of the relative Saito theorem in the remaining cases where the multiplicity of $f$ is $\leq 2$ and $X$ is not an isolated hypersurface singularity, is still open.  
 

\section{Bruce-Roberts Milnor and Tjurina Numbers}
\label{sec:2}

Let $X \subset (\mathbb{C}^n,0)$ be an analytic variety and $f\in \O_n$ a function germ such that $\mu_X(f)<\infty$.  We denote by $Y=f^{-1}(0)$ the hypersurface defined by $f$. Since $\mu_X(f)$ is finite, then $\mu(f)$ is also finite and thus $Y$ is either a smooth divisor ($\mu(f)=0$) or an isolated hypersurface singularity ($\mu(f)>0$). We also denote by
\[\Theta_Y=\{\delta \in \Theta : \delta(f)\in \langle f\rangle\}\] 
the module of logarithmic vector fields of $Y$, and by
\[H_Y=\ker df(\cdot)=\{\eta \in \Theta : df(\eta)=0\}\]
the submodule of Hamiltonian (or else, Killing) vector fields of $f$, where we denote by $df(\cdot):\Theta \rightarrow \O_n$ the corresponding evaluation map. 
It is easy to verify that in local coordinates $x=(x_1,\dots,x_n)$, the latter module is generated by the derivations
\[\eta_{ij}=\partial_{x_i}f\partial_{x_j}-\partial_{x_j}f\partial_{x_i}, \quad 1\leq i<j\leq n.\]

Finally we denote by
\[\overline{\mu}_X(f)=\dim_\C\frac{\Theta}{\Theta_X+H_Y}\]
and
\[\overline{\tau}_X(f)=\dim_\C\frac{\Theta}{\Theta_X+\Theta_Y}\]
in case where these numbers are finite. Notice that $\overline{\mu}_X(f)\geq \overline{\tau}_X(f)$, and
\begin{equation}
\label{mubar-taubar}
\overline{\mu}_X(f)-\overline{\tau}_X(f)=\dim_{\C}\frac{\Theta_{Y}}{H_Y+(\Theta_{X}\cap \Theta_{Y})}
\end{equation}
where we have used the isomorphism
\[ \frac{\Theta_{X}+\Theta_{Y}}{\Theta_{X}+H_Y}\cong \frac{\Theta_{Y}}{(\Theta_{X}+H_Y)\cap \Theta_{Y}}=
\frac{\Theta_{Y}}{H_Y+(\Theta_{X}\cap \Theta_{Y})}.\]
\begin{prop}\label{prop-1}
Let $(f,X)$ be a pair in $(\C^n,0)$ with $\mu_X(f)<\infty$.
Then the following decomposition formulas hold for the Bruce-Roberts Milnor and Tjurina numbers
\begin{equation}
\label{mubarra}
\mu_{X}(f)=\mu(f)+\overline{\mu}_X(f)
\end{equation}
\begin{equation}
\label{taubarra}
\tau_X(f)=\tau(f)+\overline{\tau}_X(f)
\end{equation}
In particular
\begin{equation}
\label{numinustau2}
\mu_X(f)-\tau_X(f)=\mu(f)-\tau(f)+\overline{\mu}_X(f)-\overline{\tau}_X(f).
\end{equation}
\end{prop}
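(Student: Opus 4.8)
The plan is to establish the two decomposition formulas \eqref{mubarra} and \eqref{taubarra} separately, after which \eqref{numinustau2} follows by subtraction. Both formulas have the same structure, so I would set up a uniform framework using the evaluation map $df(\cdot):\Theta\to\O_n$, whose image is the ideal $df(\Theta)=J(f)$ and whose kernel is $H_Y$. The first observation is that $df(\cdot)$ restricts to a map $\Theta_X\to df(\Theta_X)=J_X(f)$ with kernel $\Theta_X\cap H_Y$, so there is an induced short exact sequence relating $\Theta/\Theta_X$, the quotient $\O_n/J_X(f)$, and the kernel contribution. More precisely, I would use the snake lemma (or a direct diagram chase) on the commutative diagram with rows $0\to H_Y\to\Theta\xrightarrow{df}J(f)\to 0$ and $0\to \Theta_X\cap H_Y\to\Theta_X\xrightarrow{df}J_X(f)\to 0$.

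For \eqref{mubarra}: applying $df(\cdot)$ to the quotient $\Theta/\Theta_X$, one gets $\Theta/(\Theta_X+H_Y)\cong J(f)/J_X(f)$, because the map $df(\cdot)$ sends $\Theta$ onto $J(f)$, sends $\Theta_X$ onto $J_X(f)$, and its kernel $H_Y$ is absorbed. Hence $\overline\mu_X(f)=\dim_\C \Theta/(\Theta_X+H_Y)=\dim_\C J(f)/J_X(f)$. Now I chain the short exact sequence of $\O_n$-modules
\begin{equation*}
0\longrightarrow \frac{J(f)}{J_X(f)}\longrightarrow \frac{\O_n}{J_X(f)}\longrightarrow \frac{\O_n}{J(f)}\longrightarrow 0,
\end{equation*}
all three terms of which are finite-dimensional (the middle one by hypothesis $\mu_X(f)<\infty$, the right one by Saito-finiteness of $\mu(f)$, the left one as a subquotient), and take $\dim_\C$ to obtain $\mu_X(f)=\mu(f)+\overline\mu_X(f)$. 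For \eqref{taubarra} the argument is formally identical, with $H_Y$ replaced by $\Theta_Y$ and $J_X(f)$, $J(f)$ replaced by $J_X(f)+\langle f\rangle$, $J(f)+\langle f\rangle$ respectively: one checks $df(\cdot)$ maps $\Theta_X$ onto $J_X(f)$ and, modulo $\langle f\rangle$, the image of $\Theta_Y$ is exactly $\langle f\rangle/(\ldots)$, so that $\Theta/(\Theta_X+\Theta_Y)\cong (J(f)+\langle f\rangle)/(J_X(f)+\langle f\rangle)$, and then the analogous three-term exact sequence with $\O_n/(\,\cdot+\langle f\rangle)$ in the middle gives $\tau_X(f)=\tau(f)+\overline\tau_X(f)$. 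Subtracting the two identities yields \eqref{numinustau2}.

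The step I expect to be the main obstacle is verifying carefully that the identifications $\Theta/(\Theta_X+H_Y)\cong J(f)/J_X(f)$ and $\Theta/(\Theta_X+\Theta_Y)\cong (J(f)+\langle f\rangle)/(J_X(f)+\langle f\rangle)$ are correct as stated — in particular that no extra kernel terms survive. This amounts to checking that $(\Theta_X+H_Y)$ is precisely the preimage under $df(\cdot)$ of $J_X(f)$, i.e. that if $df(\delta)\in J_X(f)$ then $\delta\in\Theta_X+H_Y$; this is immediate since $df(\delta)=df(\delta')$ for some $\delta'\in\Theta_X$ forces $\delta-\delta'\in H_Y$. The analogous statement for $\Theta_Y$ uses that $df(\Theta_Y)\subseteq\langle f\rangle$ together with, conversely, $f=df(\chi)$ for any vector field $\chi$ with $df(\chi)=f$ — but one must be slightly careful here: such a $\chi$ need not lie in $\Theta$ unless $f\in J(f)$, which need not hold. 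The correct formulation is that $df(\Theta_Y)$ need not equal all of $\langle f\rangle$; rather $df(\Theta_Y)=\langle f\rangle\cap J(f)$ together with $H_Y$-translates, so one works throughout modulo $\langle f\rangle$ where the identity $\Theta/(\Theta_X+\Theta_Y+H_Y)\cong J(f)/(J_X(f)+(J(f)\cap\langle f\rangle))$ is what is genuinely needed. Once the finite-dimensionality of all occurring quotients is secured (which follows from $\mu_X(f)<\infty$ and the inclusions $J_X(f)\subseteq J(f)$, $J_X(f)\subseteq J_X(f)+\langle f\rangle$), the additivity of dimension along short exact sequences closes the argument.
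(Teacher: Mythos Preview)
Your proof is correct and follows essentially the same route as the paper: both rest on the two short exact sequences induced by the evaluation map $df:\Theta\to\O_n$, with first terms $\Theta/(\Theta_X+H_Y)$ and $\Theta/(\Theta_X+\Theta_Y)$ respectively, and then take dimensions. The paper writes $df$ directly as a map $\Theta/(\Theta_X+\Theta_Y)\to\O_n/(df(\Theta_X)+\langle f\rangle)$ rather than first identifying the image inside $\O_n$, which sidesteps the bookkeeping you flag about $df(\Theta_Y)=\langle f\rangle\cap J(f)$; note also that $H_Y\subseteq\Theta_Y$, so your $\Theta_X+\Theta_Y+H_Y$ is just $\Theta_X+\Theta_Y$, and your worry dissolves.
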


\begin{proof}
The evaluation map $df:\Theta\to \O_n$ and the inclusion $df(\Theta_X)\subseteq df(\Theta)$ induce the following exact sequences of $\O_n$-modules:
\begin{align}
&\xymatrix@C=0.2cm@R=1.5ex{
0 \ar[rr] &&  \displaystyle \frac{\Theta}{\Theta_{X}+H_Y} \ar[rr]^-{df} &&
\displaystyle\frac{\O_n}{df(\Theta_X)}  \ar[rr]  && \displaystyle\frac{\O_n}{df(\Theta)}  \ar[rr] && 0
}\label{ses-1}\\
&\xymatrix@C=0.2cm@R=1.5ex{
0 \ar[rr] &&  \displaystyle \frac{\Theta}{\Theta_{X}+\Theta_{Y}} \ar[rr]^-{df} &&
\displaystyle\frac{\O_n}{df(\Theta_X)+\langle f\rangle}  \ar[rr]   && \displaystyle\frac{\O_n}{df(\Theta)+\langle f\rangle}     \ar[rr] && 0
}\label{ses-2}
\end{align}
where the respective third morphisms of (\ref{ses-1}) and (\ref{ses-2}) are the natural projections.
The exactness of the above sequences lead to relations (\ref{mubarra}) and (\ref{taubarra}).

Using the $9$-lemma (or by a direct argument) we obtain another short exact sequence defined by the kernels of the natural projections (\ref{ses-1})$\rightarrow$(\ref{ses-2}):
\begin{equation*}
\xymatrix@C=0.2cm@R=1.5ex{
0 \ar[rr] &&  \displaystyle \frac{\Theta_{X}+\Theta_{Y}}{\Theta_{X}+H_Y} \ar[rr]^-{df} &&
\displaystyle\frac{df(\Theta_X)+\langle f\rangle}{df(\Theta_X)} \ar[rr]   &&
\displaystyle\frac{df(\Theta)+\langle f\rangle}{df(\Theta)} \ar[rr] && 0.
}\label{ses-3}
\end{equation*}
which, by (\ref{mubar-taubar}), leads to relation (\ref{numinustau2}). 
\end{proof}

As an immediate corollary of the above we obtain the following algebraic characterization of the equality of the Bruce-Roberts Milnor and Tjurina numbers. 

\begin{cor}\label{caract1mutau}
Let $(f,X)$ be a pair with $\mu_X(f)<\infty$.
Then the following conditions are equivalent
\begin{enumerate}
\item\label{mutau1} $\mu_X(f)=\tau_X(f)$
\item\label{mutau2} $\mu(f)=\tau(f)$ and $\overline{\mu}_X(f)=\overline{\tau}_X(f)$, the latter being equivalent to
\begin{equation}
\label{thetay}
\Theta_Y=\Theta_X\cap \Theta_Y+H_Y.
\end{equation}
\end{enumerate}
\end{cor}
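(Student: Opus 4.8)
The statement to prove is Corollary~\ref{caract1mutau}, which is an immediate consequence of Proposition~\ref{prop-1}. Let me sketch the argument.

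\medskip

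\noindent\textbf{Proof plan.} The plan is to read off both equivalences directly from the decomposition formula \eqref{numinustau2} established in Proposition~\ref{prop-1}, together with the basic inequalities $\mu(f)\geq \tau(f)$ and $\overline{\mu}_X(f)\geq \overline{\tau}_X(f)$. First I would recall that since $\mu_X(f)<\infty$, Proposition~\ref{prop-1} applies and gives
\[
\mu_X(f)-\tau_X(f)=\bigl(\mu(f)-\tau(f)\bigr)+\bigl(\overline{\mu}_X(f)-\overline{\tau}_X(f)\bigr).
\]
Both summands on the right are nonnegative: $\mu(f)\geq\tau(f)$ by the obvious inclusion $df(\Theta)\subseteq df(\Theta)+\langle f\rangle$, and $\overline{\mu}_X(f)\geq\overline{\tau}_X(f)$ because $H_Y\subseteq\Theta_Y$ forces $\Theta_X+H_Y\subseteq\Theta_X+\Theta_Y$, so that $\Theta/(\Theta_X+\Theta_Y)$ is a quotient of $\Theta/(\Theta_X+H_Y)$. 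Hence the left-hand side vanishes if and only if both summands vanish, which is precisely the equivalence of \eqref{mutau1} with the first assertion in \eqref{mutau2}, namely $\mu(f)=\tau(f)$ and $\overline{\mu}_X(f)=\overline{\tau}_X(f)$.

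\medskip

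\noindent It remains to identify the condition $\overline{\mu}_X(f)=\overline{\tau}_X(f)$ with the module equality \eqref{thetay}. For this I would invoke formula \eqref{mubar-taubar} from the discussion preceding Proposition~\ref{prop-1}, namely
\[
\overline{\mu}_X(f)-\overline{\tau}_X(f)=\dim_\C\frac{\Theta_Y}{H_Y+(\Theta_X\cap\Theta_Y)}.
\]
Since $H_Y+(\Theta_X\cap\Theta_Y)\subseteq\Theta_Y$, this dimension is zero if and only if $\Theta_Y=H_Y+(\Theta_X\cap\Theta_Y)$, which is exactly \eqref{thetay}. (One should note that the quotient is automatically finite-dimensional here, since $\overline{\mu}_X(f)\leq\mu_X(f)<\infty$ by \eqref{mubarra}, so all the numbers involved are well defined and finite.) Combining the two steps yields the chain \eqref{mutau1} $\Leftrightarrow$ \eqref{mutau2}.

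\medskip

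\noindent\textbf{Main obstacle.} There is essentially no obstacle: the corollary is a purely formal consequence of the exact sequences already built in Proposition~\ref{prop-1}. The only point requiring a moment's care is the bookkeeping of which quotients are a priori finite-dimensional — one must observe that $\mu_X(f)<\infty$ propagates, via the exact sequence \eqref{ses-1} and formula \eqref{mubarra}, to finiteness of $\overline{\mu}_X(f)$ (and hence of $\overline{\tau}_X(f)$ and of the quotient in \eqref{thetay}), so that every equality of dimensions above is meaningful. Everything else is the elementary fact that a nonnegative integer, or a sum of two such, vanishes precisely when each term does.
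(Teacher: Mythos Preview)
Your argument is correct and is exactly the approach the paper intends: the corollary is stated there without proof, as an immediate consequence of Proposition~\ref{prop-1} via formula~\eqref{numinustau2} together with the nonnegativity of both summands and the identification~\eqref{mubar-taubar}. Your added remark on finiteness (that $\mu_X(f)<\infty$ forces $\overline{\mu}_X(f)<\infty$ through~\eqref{mubarra}) makes explicit a point the paper leaves tacit.
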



In the case where the variety $X$ is also an isolated hypersurface singularity, the formulas obtained above can be substantially improved due to the following

\begin{thm}[\cite{Kour}, see also \cite{NOPT2020}]
\label{Kour} Let $h\in\O_n$ with an isolated singularity at the origin and let $X=h^{-1}(0)$. Let $f\in\O_n$
such that $\mu_X(f)<\infty$. Then
\begin{equation}\label{Keq}
\mu_X(f)=\mu(f)+\mu(f,h)+\mu(h)-\tau(h),
\end{equation}
where $\mu(f,h)$ is the Milnor number of the {\textsc{icis}} (f,h).
\end{thm}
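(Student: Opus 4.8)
The plan is to prove the formula by combining Proposition~\ref{prop-1} with a careful analysis of the module $\Theta_X \cap \Theta_Y$ in the special case where $X = h^{-1}(0)$ is itself an isolated hypersurface singularity. By equation~(\ref{mubarra}) it suffices to show that $\overline{\mu}_X(f) = \dim_\C \Theta/(\Theta_X + H_Y) = \mu(f,h) + \mu(h) - \tau(h)$. The starting observation is that when $X$ is a hypersurface $\{h=0\}$, the module $\Theta_X = \Theta_{\{h=0\}}$ is exactly $\{\delta : \delta(h) \in \langle h\rangle\}$, so it plays a role completely symmetric to $\Theta_Y = \{\delta : \delta(f) \in \langle f\rangle\}$. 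First I would decompose $\Theta/(\Theta_X + H_Y)$ using the evaluation map $df(\cdot)$, which identifies it with a submodule of $\O_n$; the image is precisely $df(\Theta_X) = \{\delta(f) : \delta(h) \in \langle h\rangle\}$, and one wants to understand the colength of this ideal directly.

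The key step is to relate $df(\Theta_X)$ to more familiar ideals attached to the ICIS $(f,h)$. I would consider the chain of inclusions $J(f) = df(\Theta) \supseteq df(\Theta_X) \supseteq$ (something like $\langle h\rangle J(f) + \text{Jacobian-minor terms}$), and try to build an exact sequence. The natural candidate is to use that a vector field $\delta$ lies in $\Theta_X$ iff $\delta(h) = \lambda h$ for some $\lambda \in \O_n$; writing $\delta = \delta_0 + g\,\chi$ type decompositions won't quite work since there may be no Euler field, so instead I would work with the syzygy module: $\delta(h) = \lambda h$ means $(\delta, -\lambda)$ is a relation among the components of $(dh, h)$, i.e. an element of the syzygy module of the ideal defining the ICIS. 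Concretely, the map $\Theta \times \O_n \to \O_n$ sending $(\delta,\lambda) \mapsto \delta(h) - \lambda h$ has $\Theta_X$ as (essentially) the image of its kernel under projection. Then applying $df$ and using the finiteness hypothesis $\mu_X(f) < \infty$ (which forces $(f,h)$ to be an ICIS, in particular $\mu(f,h)$ is finite), I expect a four-term or longer exact sequence whose Euler characteristic yields the terms $\mu(f,h)$ and $\mu(h) - \tau(h)$. The appearance of $\mu(h) - \tau(h) = \dim_\C \O_n/(J(h) + \langle h\rangle) \cdot$(correction)—more precisely $\mu(h)-\tau(h)$ measures the failure of $h \in J(h)$—strongly suggests that the cokernel of multiplication by $h$ on $\O_n/J(f)$, or the module $\O_n/(J(f) + \langle h\rangle)$ compared with the ICIS invariant, is where this difference enters.

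More concretely, I would try the following: let $\mathfrak{a} = df(\Theta_X)$. There is an obvious inclusion $h\,J(f) + \langle \partial(f,h)/\partial(x_i,x_j) \rangle_{i<j} \subseteq \mathfrak{a}$, because $h\,\partial_{x_i} f = df(h\,\partial_{x_i})$ with $h\partial_{x_i} \in \Theta_X$, and the $2\times 2$ Jacobian minors $\partial_{x_i}f\,\partial_{x_j}h - \partial_{x_j}f\,\partial_{x_i}h = df(\partial_{x_j}h\,\partial_{x_i} - \partial_{x_i}h\,\partial_{x_j})$ with that bracketed field sending $h$ to $0 \in \langle h\rangle$. In fact I would hope for equality, so that $\mathfrak{a}$ equals the ideal $h\cdot J(f) + I_2$ where $I_2$ is generated by the Jacobian minors of $(f,h)$; then $\O_n/\mathfrak{a}$ can be computed by a standard exact sequence for ICIS (the one relating $\O_n/(J(f) + \langle f\rangle)$-type quotients), giving $\mu(f,h)$ plus a correction term equal to $\mu(h)-\tau(h)$ coming from $\dim_\C \langle h\rangle/(h\cdot J(f) \cap \langle h\rangle) = \dim_\C \O_n/(J(f) + \langle 0:h\rangle)$, which by the isolated-singularity hypothesis on $h$ simplifies appropriately.

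The main obstacle I anticipate is establishing the equality $df(\Theta_X) = h\,J(f) + I_2(f,h)$ — that is, showing that every element $\delta(f)$ with $\delta(h) \in \langle h\rangle$ already lies in this more explicit ideal. The inclusion $\supseteq$ is immediate as sketched above, but $\subseteq$ requires a syzygy argument: given $\delta$ with $\delta(h) = \lambda h$, one must express $\delta$ modulo $H_Y$ (vector fields killing $f$) as a combination of the fields $h\,\partial_{x_i}$ and the Hamiltonian fields of $h$, and this is exactly a statement about generators of the module of vector fields tangent to the ICIS, which may fail or require the Cohen–Macaulayness / depth properties of the ICIS singular locus. If the clean equality does not hold, the fallback is to write a longer exact sequence with an explicit Tor term and show that term has the right dimension using the ICIS structure; either way the computation of Euler characteristics and the identification of $\mu(h)-\tau(h)$ as $\dim_\C\O_n/(\langle h\rangle + J(h))$-style quantity via Saito-type arguments should then close the proof.
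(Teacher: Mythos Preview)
First, note that the paper does not supply its own proof of Theorem~\ref{Kour}; the result is quoted from \cite{Kour} and \cite{NOPT2020} and used as a black box from Remark~\ref{Kourrem} and Proposition~\ref{lemaB} onward. So there is no in-paper argument to compare your attempt against.

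Your reduction via (\ref{mubarra}) to computing $\overline{\mu}_X(f)=\dim_\C\Theta/(\Theta_X+H_Y)$ is sound, and the instinct to exploit that $X$ is itself a hypersurface is exactly right. The genuine weak point is the hoped-for identity $df(\Theta_X)=h\,J(f)+I_2(f,h)$, which is equivalent to $\Theta_X\subseteq h\Theta+H_X+H_Y$. This is \emph{not} automatic: the map $\Theta_X\to\O_n$, $\delta\mapsto\delta(h)/h$, has kernel $H_X$ and image the colon ideal $(J(h):h)$, whereas $h\Theta+H_X$ maps only onto $J(h)$; the quotient $(J(h):h)/J(h)$ has dimension $\tau(h)$ by Gorenstein duality in the Artinian ring $\O_n/J(h)$. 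Whether $H_Y$ supplies the missing $\tau(h)$ dimensions is precisely the question, and unpacking it drags you into colon ideals of $J(f,h)$ and the Tor terms you yourself flag as the main obstacle --- but your outline offers no mechanism to resolve them.

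A much cleaner route avoids $df$ altogether and evaluates by $dh$. Since $\Theta_X=dh^{-1}(\langle h\rangle)$, the map $dh$ induces an isomorphism $\Theta/\Theta_X\cong(J(h)+\langle h\rangle)/\langle h\rangle$, under which the image of $H_Y$ is $(J(f,h)+\langle h\rangle)/\langle h\rangle$ because $dh(H_Y)=J(f,h)$. Hence
\[
\overline{\mu}_X(f)=\dim_\C\frac{J(h)+\langle h\rangle}{J(f,h)+\langle h\rangle}
=\dim_\C\frac{\O_n}{J(f,h)+\langle h\rangle}-\tau(h),
\]
and the L\^e--Greuel formula $\dim_\C\O_n/(\langle h\rangle+J(f,h))=\mu(f,h)+\mu(h)$ (see \cite{Gr1}, \cite{Le1}) finishes the argument in one line. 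The moral is that evaluating along the \emph{given} hypersurface $X$ rather than along $Y$ makes both $\tau(h)$ and $\mu(h)$ appear transparently, with no syzygy analysis required.
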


\begin{rem}\label{Kourrem}
We remark that in the case where one has equality $\mu(h)=\tau(h)$ (i.e. $h$ is quasihomogeneous in some coordinate system), formula (\ref{Keq}) above implies the following version of L\^e-Greuel's formula \cite{Gr1}, \cite{Le1}
\[\mu_X(f)=\mu(f)+\mu(f,h)\]
and in particular:
\[df(\Theta_X)=\langle f\rangle +J(f,h)\]
where $J(f,h)=df(H_X)=dh(H_Y)$ is the ideal generated by the $2\times 2$ minors of the Jacobian matrix of the {\textsc{icis}} $(f,h)$. This will be useful in the proof of Theorem \ref{thm-main2} in Section \ref{sec:4}. 
\end{rem}
 
Before we proceed, let us notice that if $X$ is an isolated hypersurface singularity and $f\in \O_n$ such that $\mu_X(f)<\infty$, then we can always choose a function $h\in \O_n$ with an isolated singularity at the origin, such that $X=h^{-1}(0)$ and $\mu_Y(h)<\infty$ as well, where $Y=f^{-1}(0)$ ($h$ will be nothing but a $1$-parameter smoothing of the {\textsc{icis}}  $X\cap Y=(f,h)^{-1}(0)$). In this case we will generally have inequalities
\[\mu_X(f)\neq \mu_Y(f), \quad \tau_X(f)\neq \tau_Y(h)\]
but the following equality
\[\overline{\tau}_X(f)=\overline{\tau}_Y(h)=\dim_{\C}\frac{\Theta}{\Theta_X+\Theta_Y}\]
always holds true, merely by definition (by obvious symmetry in interchanging the roles of $f$ and $h$). We denote this common number by
\[\overline{\tau}(f,h)=\overline{\tau}_X(f)=\overline{\tau}_Y(h).\]

\begin{prop}\label{lemaB}
Let $f,h\in\O_n$ such that $\mu_X(f)<\infty$ and $\mu_Y(h)<\infty$ where $X=h^{-1}(0)$ and $Y=f^{-1}(0)$.
Then
\begin{equation}\label{mumenostau}
\mu_X(f)-\tau_X(f)=\mu(f)-\tau(f)+\mu(h)-\tau(h)+\mu(f,h)-\overline{\tau}(f,h).
\end{equation}
In particular
\begin{equation}\label{symBRDNQ}
\mu_X(f)-\tau_X(f)=\mu_Y(h)-\tau_Y(h).
\end{equation}
\end{prop}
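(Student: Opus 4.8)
The plan is to obtain both identities by combining Theorem~\ref{Kour} (the L\^e--Greuel type formula for Bruce--Roberts Milnor numbers of isolated hypersurface singularities) with the decomposition formula (\ref{taubarra}) for the Bruce--Roberts Tjurina number. The first thing to record is that the finiteness hypotheses are quite rigid: from $\Theta_X\subseteq\Theta$ one gets $df(\Theta_X)\subseteq J(f)$, hence $\mu(f)\leq\mu_X(f)<\infty$, so $f$ has at most an isolated singularity at $0$ and $Y=f^{-1}(0)$ is either a smooth divisor or an isolated hypersurface singularity; symmetrically $\mu(h)\leq\mu_Y(h)<\infty$ and $X=h^{-1}(0)$ is of the same type. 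In particular Theorem~\ref{Kour} applies verbatim to each of the pairs $(f,X)$ and $(h,Y)$.

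Next I would apply Theorem~\ref{Kour} to the pair $(f,X)$, which gives $\mu_X(f)=\mu(f)+\mu(f,h)+\mu(h)-\tau(h)$, and combine it with (\ref{taubarra}) together with the tautological identity $\overline{\tau}_X(f)=\overline{\tau}(f,h)$, which gives $\tau_X(f)=\tau(f)+\overline{\tau}(f,h)$. Subtracting these two expressions and regrouping the terms as $\bigl(\mu(f)-\tau(f)\bigr)+\bigl(\mu(h)-\tau(h)\bigr)+\bigl(\mu(f,h)-\overline{\tau}(f,h)\bigr)$ yields precisely (\ref{mumenostau}).

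For (\ref{symBRDNQ}) I would simply observe that the right-hand side of (\ref{mumenostau}) is manifestly invariant under interchanging the roles of $f$ and $h$ (equivalently of $X$ and $Y$): indeed $\mu(f,h)=\mu(h,f)$ because the \textsc{icis} $(f,h)^{-1}(0)$ does not depend on the order in which the two defining functions are listed, and $\overline{\tau}(f,h)=\overline{\tau}(h,f)=\dim_{\C}\Theta/(\Theta_X+\Theta_Y)$ directly from the definition of this common number. Running the computation of the previous paragraph with $f$ and $h$ exchanged therefore produces the same value for $\mu_Y(h)-\tau_Y(h)$, and (\ref{symBRDNQ}) follows at once.

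Since the argument is a formal manipulation of the decomposition formulas already established in Section~\ref{sec:2}, I do not expect a real obstacle; the only point that needs a little care is the borderline case $\mu(f)=0$ (resp. $\mu(h)=0$), i.e.\ when $Y$ (resp.\ $X$) is smooth, where one must check that Theorem~\ref{Kour} and the conventions for $\mu(f,h)$ and $\tau(h)$ stay consistent --- in that situation (\ref{Keq}) degenerates to the classical formula $\mu_X(f)=\mu(f)+\mu(f|_X)$ for functions on a smooth divisor, so nothing goes wrong.
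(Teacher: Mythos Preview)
Your proof is correct and follows essentially the same route as the paper: apply Theorem~\ref{Kour} to get $\mu_X(f)$, use (\ref{taubarra}) together with $\overline{\tau}_X(f)=\overline{\tau}(f,h)$ to get $\tau_X(f)$, subtract, and then invoke the manifest $f\leftrightarrow h$ symmetry of the resulting expression. The only differences are cosmetic --- you spell out the finiteness checks and the smooth borderline case more explicitly than the paper does.
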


\begin{proof}
By Theorem \ref{Kour} and relation (\ref{taubarra}) we have
\begin{align*}
\mu_X(f)-\tau_X(f)&=\mu(f)+\mu(f,h)+\mu(h)-\tau(h)-\tau_X(f)\\
&=\mu(f)-\tau(f)+\mu(h)-\tau(h)+\mu(f,h)-\overline{\tau}(f,h)\\.
&=\mu_Y(h)-\tau_Y(h)
\end{align*}
\end{proof}




Let us now see how the algebraic characterization of the equality between the Bruce-Roberts Milnor and Tjurina numbers given in Corollary \ref{caract1mutau}, reads in the case of a pair of isolated hypersurface singularities






\begin{cor}\label{char2}
Let $f,h\in\O_n$ such that $\mu_X(f)<\infty$ and $\mu_Y(h)<\infty$, where $X=h^{-1}(0)$ and $Y=f^{-1}(0)$.
Then, the following conditions are equivalent
\begin{enumerate}
\item $\mu_X(f)=\tau_X(f)$
\item $\mu(f)=\tau(f)$, $\mu(h)=\tau(h)$ and $\mu(f,h)=\overline{\tau}(f,h)$.
\end{enumerate}
\end{cor}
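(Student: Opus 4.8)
The plan is to deduce this directly from Corollary \ref{caract1mutau} together with the symmetry established in Proposition \ref{lemaB}. First I would apply Corollary \ref{caract1mutau} to the pair $(f,X)$: condition (1), $\mu_X(f)=\tau_X(f)$, is equivalent to the conjunction of $\mu(f)=\tau(f)$ and $\overline\mu_X(f)=\overline\tau_X(f)$, the latter being the module equality $\Theta_Y=\Theta_X\cap\Theta_Y+H_Y$ from \eqref{thetay}. So the whole task reduces to showing that, under the standing hypothesis that both $f$ and $h$ define isolated hypersurface singularities (equivalently $\mu(f)=\tau(f)$ or not, $\mu(h)<\infty$, and $\mu(f,h)<\infty$), the single equality $\overline\mu_X(f)=\overline\tau_X(f)$ is equivalent to ``$\mu(h)=\tau(h)$ and $\mu(f,h)=\overline\tau(f,h)$''.

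The key computation is to re-express $\overline\mu_X(f)$ when $X$ is an isolated hypersurface singularity. By relation \eqref{mubarra} we have $\overline\mu_X(f)=\mu_X(f)-\mu(f)$, and now I would invoke Theorem \ref{Kour} (Kourliouros's formula), which gives $\mu_X(f)=\mu(f)+\mu(f,h)+\mu(h)-\tau(h)$. Combining these,
\begin{equation*}
\overline\mu_X(f)=\mu(f,h)+\mu(h)-\tau(h).
\end{equation*}
On the other hand $\overline\tau_X(f)=\overline\tau(f,h)$ by the symmetric definition recalled just before Proposition \ref{lemaB}. Hence
\begin{equation*}
\overline\mu_X(f)-\overline\tau_X(f)=\bigl(\mu(h)-\tau(h)\bigr)+\bigl(\mu(f,h)-\overline\tau(f,h)\bigr),
\end{equation*}
which is of course just relation \eqref{mumenostau} of Proposition \ref{lemaB} with the term $\mu(f)-\tau(f)$ stripped off. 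Both summands on the right are non-negative: $\mu(h)-\tau(h)\geq 0$ by definition, and $\mu(f,h)-\overline\tau(f,h)=\overline\mu_Y(h)-\overline\tau_Y(h)\geq 0$ since $\overline\mu\geq\overline\tau$ always (or one can see $\mu(f,h)\geq\overline\tau(f,h)$ directly from the analogue of \eqref{mubarra}--\eqref{taubarra} for the pair $(h,Y)$). Therefore their sum vanishes if and only if each vanishes, i.e. $\overline\mu_X(f)=\overline\tau_X(f)$ is equivalent to $\mu(h)=\tau(h)$ together with $\mu(f,h)=\overline\tau(f,h)$.

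Putting the two equivalences together yields the statement: $\mu_X(f)=\tau_X(f)$ $\iff$ $\mu(f)=\tau(f)$ and $\overline\mu_X(f)=\overline\tau_X(f)$ $\iff$ $\mu(f)=\tau(f)$, $\mu(h)=\tau(h)$, and $\mu(f,h)=\overline\tau(f,h)$. The only genuinely substantive input is Theorem \ref{Kour}, which is quoted; the rest is bookkeeping with the exact sequences of Proposition \ref{prop-1} and the non-negativity of the $\overline\mu-\overline\tau$ differences, so I do not anticipate a real obstacle here — the main point to be careful about is that Theorem \ref{Kour} requires $h$ to have an isolated singularity, which is exactly part of the hypothesis, and that $\mu(f,h)<\infty$ (equivalently $X\cap Y$ is an {\icis}), which follows from $\mu_X(f)<\infty$.
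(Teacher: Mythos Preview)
Your approach is essentially the paper's: both combine Corollary \ref{caract1mutau} with the formula coming from Theorem \ref{Kour}/Proposition \ref{lemaB}, and both exploit the symmetry between the pairs $(f,X)$ and $(h,Y)$. There is one point that needs correction, though. The claimed equality $\mu(f,h)-\overline\tau(f,h)=\overline\mu_Y(h)-\overline\tau_Y(h)$ is \emph{not} valid in general: by Theorem \ref{Kour} applied to $(h,Y)$ one has $\overline\mu_Y(h)=\mu_Y(h)-\mu(h)=\mu(f,h)+\mu(f)-\tau(f)$, so the two sides differ by $\mu(f)-\tau(f)$. In your argument this causes no real damage, because at the moment you invoke it (the direction $(1)\Rightarrow(2)$) you have already obtained $\mu(f)=\tau(f)$ from Corollary \ref{caract1mutau}; but you must make this dependence explicit rather than presenting the equivalence ``$\overline\mu_X(f)=\overline\tau_X(f)\iff\mu(h)=\tau(h)$ and $\mu(f,h)=\overline\tau(f,h)$'' as a standalone fact under the finiteness hypotheses alone. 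Indeed, the bare inequality $\mu(f,h)\geq\overline\tau(f,h)$ is only \emph{conjectural} in the paper (see Remark \ref{conjbarratau}), so your parenthetical ``or one can see $\mu(f,h)\geq\overline\tau(f,h)$ directly'' is not justified. The paper's own proof avoids this issue by invoking the symmetry \eqref{symBRDNQ} to obtain $\mu_Y(h)=\tau_Y(h)$ and then applying Corollary \ref{caract1mutau} a second time, extracting $\mu(h)=\tau(h)$ directly; once both $\mu(f)=\tau(f)$ and $\mu(h)=\tau(h)$ are in hand, $\mu(f,h)=\overline\tau(f,h)$ drops out of \eqref{mumenostau}.
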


\begin{proof} It suffices to check $(1) \Rightarrow (2)$.
By Corollary \ref{caract1mutau} the condition $\mu_X(f)=\tau_X(f)$ implies the following two conditions:
$\mu(f)=\tau(f)$, and $\overline{\mu}_X(f)=\overline{\tau}(f,h)$. By (\ref{mumenostau}) they read as
\[\mu(f)=\tau(f) \quad \text{and}\]
\[\mu(h)-\tau(h)+\mu(f,h)=\overline{\tau}(f,h).\]
By (\ref{symBRDNQ}), condition $\mu_X(f)=\tau_X(f)$ is equivalent to $\mu_Y(h)=\tau_Y(h)$ which in turn leads, by Corollary \ref{caract1mutau} again, to $\mu(h)=\tau(h)$, and $\overline{\mu}_Y(h)=\overline{\tau}(f,h)$. By (\ref{mumenostau}), these read as :
\[\mu(h)=\tau(h) \quad \text{and}\]
\[\mu(f)-\tau(f)+\mu(f,h)=\overline{\tau}(f,h).\]
By combining these equations we obtain the required equalities: $\mu(f)=\tau(f)$, $\mu(h)=\tau(h)$ and $\mu(f,h)=\overline{\tau}(f,h)$.
\end{proof}

\begin{rem}
\label{conjbarratau}
It is important to notice that the number $\overline{\tau}(f,h)$ defined above is in general not equal to the ordinary Tjurina number $\tau(f,h)$ of the {\textsc{icis}} $(f,h)$ (c.f. \cite{Looijenga} for exact definition of the latter). Despite this fact, if $\mu_X(f)=\tau_X(f)$ then, as it will follow from the proof of Theorem \ref{thm-main}, the pair $(f,X)$ will be relatively quasihomogeneous, which immediately implies that the {\textsc{icis}} $X\cap Y=(f,h)^{-1}(0)$ will also be quasihomogeneous, and thus:
\begin{equation}
\label{relq-qicis}
\mu_X(f)=\tau_X(f)\Longrightarrow \mu(f,h)=\tau(f,h)=\overline{\tau}(f,h).
\end{equation}
Having this, we conjecture that in general (i.e. without the assumption of relative quasihomogeneity $\mu_X(f)=\tau_X(f)$), the following chain of inequalities will also hold true:
\[\mu(f,h)\geq \tau(f,h)\geq \overline{\tau}(f,h),\] 
where the first inequality is already well known, due to \cite{LS}. 
\end{rem}

Below we give an example which shows that the inverse implication of (\ref{relq-qicis}) does not hold in general, i.e. quasihomogeneity of an {\textsc{icis}} $(f,h)$ does not imply relative quasihomogeneity of the pair $(f,X=h^{-1}(0))$ (nor of $(h,Y=f^{-1}(0))$).

\begin{ex} Let us consider the functions $f,h\in \O_3$ with isolated singularity at the origin given by
$f(x,y,z)=x^2+y^4+z^5$ and $h(x,y,z)=yz+x^3$.
 Let $X=h^{-1}(0)$ and $Y=f^{-1}(0)$. We have that $f$ is quasihomogeneous with respect to $(1,1/2,2/5)$ and
$h$ is weighted homogeneous with respect to any vector $(w_1,w_2,w_3)\in\Q^3_+$ such that $w_2+w_3=3w_1$.
We have checked that $\mu(f)=12$, $\mu(h)=2$. Moreover
\begin{align*}
\mu(f,h)=10         &&   \mu_X(f)=22  &&\mu_Y(h)=12\\
\tau(f,h)=10          &&  \tau_X(f)=21  &&\tau_Y(h)=11.\\
\overline{\tau}(f,h)=9.
\end{align*}
By the equality $\mu(f,h)=\tau(f,h)=10$ and Vosegaard's theorem \cite{Vo}, it follows that the {\textsc{icis}} $X\cap Y=(f,h)^{-1}(0)$ is quasihomogeneous in an appropriate coordinate system, but the pair $(f,X)$ (and $(h,Y)$ respectively) is not relatively quasihomogeneous in any coordinate system, since $\mu_X(f)-\tau_X(f)=\mu_Y(h)-\tau_Y(h)=1$. Moreover, $\tau(f,h)-\overline{\tau}(f,h)=1$ is positive, as conjecture in  Remark \ref{conjbarratau} above.
\end{ex}

\section{Relative Quasihomogeneity and the Logarithmic Poincar\'e-Dulac Theorem}
\label{sec:3}

In this section we give an invariant characterization of relative quasihomogeneity of pairs $(f,X)$ in terms of logarithmic vector fields. For this we need to pass to the formal category first, where we denote by $\widehat{\O}_n$ the formal completion of $\O_n$. The key lemma is the following logarithmic version of the Poincar\'e-Dulac normal form theorem.

\begin{thm} 
\label{log-pd}
Let $\delta \in \Theta$ be a germ of an analytic vector field at the origin of $\mathbb{C}^n$, such that $\delta(0)=0$. 
Let $\delta_S=\sum_{i=1}^nw_ix_i\partial_{x_i}$ be the semi-simple part of $\delta$,
where $\text{sp}(\delta)=(w_1,\dots,w_n)\in\C^n$ are the eigenvalues of $\delta$ (i.e. of its linear part $j^1\delta$). Then
\begin{itemize}
\item[(a)] there exists a formal change of
coordinates such that
\[\delta=\delta_S+\delta_N\]
where $\delta_N$ is a nilpotent vector field which commutes with $\delta_S$ (and thus with $\delta$)
\[[\delta_S,\delta_N]=0.\] 
\item[(b)] Any eigenfunction $f\in \widehat{\O}_n$ of $\delta$ is also an eigenfunction of its semi-simple part $\delta_S$
\[\delta(f)=f\Longleftrightarrow \delta_S(f)=f, \quad \delta_N(f)=0.\] 
\item[(c)] Any $\delta$-invariant ideal $I\subset \widehat{\O}_n$ is also $\delta_S$ and $\delta_N$-invariant 
\[\delta(I)\subseteq I\Longleftrightarrow \delta_S(I)\subseteq I, \quad \delta_N(I)\subseteq I.\]
\item[(d)] Any $\delta_S$-invariant ideal $I\subset \widehat{\O}_n$ always admits a system of weighted homogeneous generators, i.e. there exist formal series $h_i\in \widehat{\O}_n$, $i=1,\dots,m$, such that
\[I=\langle h_1,\dots,h_m\rangle\]
\[\delta_S(h_i)=d_ih_i, \quad d_i\in \mathbb{C}, \quad i=1,\dots,m.\]  
\item[(e)] Any weighted homogeneous ideal $I=\langle h_1,\dots,h_m\rangle \subset \widehat{\O}_n$ with complex weights $w=(w_1,\dots,w_n)\in \mathbb{C}^n$ and complex weighted degree $d=(d_1,\dots,d_m)\in \mathbb{C}^m$, is also weighted homogeneous with respect to a system of rational weights $w'=(w_1',\dots,w_n')\in \mathbb{Q}^n$, and with rational weighted degree $d'=(d'_1,\dots,d'_{m})\in \mathbb{Q}^m$.  
\end{itemize}
\end{thm}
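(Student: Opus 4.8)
The plan is to prove each part of Theorem~\ref{log-pd} in turn, treating (a)--(c) as the classical Poincar\'e--Dulac package adapted to ideals, and (d)--(e) as essentially linear-algebra statements about the action of the semi-simple operator $\delta_S$ on the formal power series ring.

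First I would establish (a) by the standard Poincar\'e--Dulac argument in the formal category: write $\delta = j^1\delta + (\text{higher order})$, put the linear part $j^1\delta$ in its Jordan--Chevalley decomposition $j^1\delta = (j^1\delta)_S + (j^1\delta)_N$, and then kill the non-resonant terms of the higher-order part degree by degree via a formal coordinate change, arriving at a normal form in which $\delta$ commutes with its semisimple linear part $\delta_S = \sum w_i x_i\partial_{x_i}$. Setting $\delta_N := \delta - \delta_S$, commutativity $[\delta_S,\delta_N]=0$ is exactly the Poincar\'e--Dulac normalization condition, and $\delta_N$ is nilpotent because its linear part is $(j^1\delta)_N$ and all its higher-order terms are resonant monomial vector fields, which act nilpotently on each finite jet space. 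This is standard (e.g. Arnol'd's textbook), so I would cite it rather than reproduce it.

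For (b) and (c), after the coordinate change of (a) we may assume $\delta = \delta_S + \delta_N$ with $[\delta_S,\delta_N]=0$. The point is that $\delta_S$ and $\delta_N$ are the semisimple and nilpotent parts of the operator $\delta$ acting on $\widehat{\O}_n$ (or on each finite jet $\widehat{\O}_n/\m^{k+1}$, which is a finite-dimensional vector space preserved by $\delta$, $\delta_S$, $\delta_N$): indeed $\delta_S$ acts semisimply because it is diagonalized by the monomial basis with eigenvalue $\langle w,m\rangle$ on $x^m$, $\delta_N$ acts nilpotently, and they commute, so by uniqueness of the Jordan--Chevalley decomposition, $\delta_S$ and $\delta_N$ are polynomials in $\delta$ (on each jet space). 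Hence any $\delta$-stable subspace --- in particular the eigenspace $\{f : \delta(f)=f\}$ and any $\delta$-invariant ideal $I$ (intersected with each jet space) --- is automatically $\delta_S$- and $\delta_N$-stable. Given $\delta(f)=f$, decompose $f=\sum_m f_m$ into $\delta_S$-eigencomponents; then $\delta_N(f)=0$ and $\delta_S(f)=f$ follow from matching the semisimple and nilpotent actions, and conversely. For ideals one argues the same way on $I/(I\cap\m^{k+1})$ inside $\widehat{\O}_n/\m^{k+1}$ and passes to the limit; this gives (c), and (d) then follows since a $\delta_S$-invariant ideal is a sum of its intersections with the eigenspaces (the finite-dimensional $x^m$-graded pieces), each of which is finitely generated over $\widehat{\O}_n$ by Noetherianity, and we may take generators lying in single eigenspaces, i.e.\ $\delta_S(h_i)=d_ih_i$.

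The genuinely new ingredient is (e): passing from complex weights to rational ones while keeping the \emph{same} ideal weighted homogeneous. Here I would argue as follows. Having generators $h_i$ with $\delta_S(h_i)=d_i h_i$, look at the $\Q$-vector subspace $V\subseteq\C$ spanned by $\{w_1,\dots,w_n\}$ together with $1$ (the value $\delta_S(f)=f$ forces $1\in\langle w,\Z^n\rangle$ when $f\ne 0$), pick a $\Q$-basis, and consider the grading of $\widehat{\O}_n$ by the group $\Lambda=\sum \Z w_i\subseteq\C$: a monomial $x^m$ has multidegree $\langle w,m\rangle\in\Lambda$, and $h_i$ being a $\delta_S$-eigenvector means it is $\Lambda$-homogeneous of degree $d_i$. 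Now any $\Q$-linear functional $\ell:\Lambda\otimes\Q\to\Q$ with $\ell(w_i)>0$ for all $i$ defines new weights $w_i':=\ell(w_i)\in\Q$ with respect to which every $\Lambda$-homogeneous element, in particular each $h_i$, is again weighted homogeneous (of degree $\ell(d_i)\in\Q$); such positive rational $\ell$ exists because each $w_i$, being an eigenvalue arising in the situation of interest, is required to be positive, so the $w_i$ lie in an open cone and the cone of positive functionals is nonempty and rational-point-dense. The main obstacle --- and the only subtlety I foresee --- is making precise that one really may assume the $w_i$ positive (this is where the hypotheses on the pair $(f,X)$ in the applications enter, and where one should be careful: as stated, (e) seems to need $\mathrm{Re}(w_i)>0$ or an analogous Poincar\'e-domain condition, since otherwise a purely imaginary eigenvalue cannot be sent to a positive rational); I would phrase (e) under the standing assumption, used throughout Section~\ref{sec:4}, that the relevant $\delta$ has eigenvalues in the Poincar\'e domain (equivalently, all lying strictly on one side of a real hyperplane through $0$), which is exactly what the equalities $\mu=\tau$, $\mu_X(f)=\tau_X(f)$ supply via Saito's and Vosegaard's theorems. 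Modulo that, (e) is the elementary observation that a $\Lambda$-grading by a finitely generated subgroup of $(\C,+)$ lying in a half-space can be refined/projected to a $\Z$- or $\Q$-grading by a positive linear functional.
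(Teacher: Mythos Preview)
Your treatment of (a)--(d) is correct and is exactly the content behind the references the paper cites (Arnol'd, Saito, Scheja--Wiebe, Kruff, Damon): the paper's own ``proof'' of these parts consists only of pointers to the literature, and the Jordan--Chevalley argument on each jet space $\widehat{\O}_n/\m^{k+1}$, together with passage to the limit via Krull's intersection theorem, is precisely how those references proceed.

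For (e), however, you have misread the statement. Part (e) only asserts the existence of \emph{rational} weights $w'\in\Q^n$, not \emph{positive} rational weights. The positivity constraint you impose on the functional $\ell$ is therefore superfluous, and your worry that ``(e) seems to need $\mathrm{Re}(w_i)>0$ or an analogous Poincar\'e-domain condition'' is unfounded: no such hypothesis is required, and you should not propose to add one. Once you drop the requirement $\ell(w_i)>0$, your own argument already proves (e): any nonzero $\Q$-linear functional $\ell$ on $\Lambda\otimes\Q$ yields rational weights $w'_i=\ell(w_i)$ (not all zero, since $w\ne 0$) for which every monomial $x^\alpha$ in $h_i$ has $\langle w',\alpha\rangle=\ell(\langle w,\alpha\rangle)=\ell(d_i)\in\Q$. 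Equivalently---and this is Saito's argument in \cite[Lemma~1.4]{Sa1}, which the paper cites---the set of $w'\in\C^n$ making each $h_i$ weighted homogeneous is the solution space of the linear system $\langle w',\alpha-\beta\rangle=0$ (ranging over pairs $\alpha,\beta$ in the support of the same $h_i$); the coefficients are integers, so the solution space is defined over $\Q$, and since it contains the nonzero complex point $w$ it contains a nonzero rational point.

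Finally, note that in the applications the positivity of the weights is never obtained from (e). In Corollary~\ref{cor-pd} it is a \emph{hypothesis} on $\delta$, and in Theorems~\ref{thm-main1} and~\ref{thm-main2} it is supplied separately by Saito's theorem for $f$ (giving $w\in((0,1]\cap\Q)^n$ from the start), after which one only needs parts (a)--(d).
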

\begin{proof}
All the parts of the theorem are well known, but scattered across the literature, so we give below the corresponding references:
\begin{itemize}
\item[(a)] This is given in any course of ordinary differential equations, and is known as the Poincar\'e-Dulac normal form c.f. \cite[Ch. 5, \S 23]{Ar}, see also \cite[Satz 3.1]{Sa1}, in terms of derivations. 
\item[(b)] This can be easily proved by using the decomposition of a function in its weighted homogeneous components and the commutativity of the semi-simple with the nilpotent parts of $\delta$, c.f. \cite[Satz 3.2]{Sa1}.
\item[(c)] This is slightly more complicated than (b), c.f. \cite[Theorem 3.2]{Kr} for a recent analytic proof, and also \cite[(2.1)-(2.4)]{SW1} in terms of derivations.  
\item[(d)] This is an exercise on graded submodules of Noetherian graded modules, c.f. \cite[Lemma 3.2]{DamonTop}, and also \cite[(2.4)]{SW1} in terms of derivations.
\item[(e)] This is a straightforward modification of \cite[Lemma 1.4]{Sa1} proved there for a single weighted homogeneous function.
\end{itemize}
\end{proof}

\begin{rem}
In case where the eigenvalues $\text{sp}(\delta)=(w_1,\dots,w_n)\in \mathbb{C}^n$ satisfy certain arithmetic conditions (e.g. they belong in the Poincar\'e domain, c.f. \cite[Ch. 5, \S 24]{Ar}), then both the change of coordinates, as well as the Poincar\'e-Dulac normal form itself, can be chosen to be analytic. In such a case, if $X=V(I)$ is an analytic variety and $f\in \O_n$ is an analytic function germ, then the theorem above says that for any vector field $\delta \in \Theta$, $\delta(0)=0$, the following equivalences hold in analytic Poincar\'e-Dulac coordinates
\[\delta(f)=f\Longleftrightarrow \delta_S(f)=f, \quad \delta_N(f)=0\]
\[\delta \in \Theta_X\Longleftrightarrow \delta_S\in \Theta_X, \quad \delta_N\in \Theta_X.\]
\end{rem}

As an immediate corollary of the above we obtain the following invariant characterization of relative quasihomogeneity for pairs $(f,X)$:
\begin{cor}
\label{cor-pd}
A pair $(f,X)$ is relatively quasihomogeneous if and only if there exists a logarithmic vector field $\delta \in \Theta_X$, $\delta(0)=0$, with positive rational eigenvalues, 
which admits $f$ as an eigenfunction (we can always choose the eigenvalue equal to $1$):
\[\delta \in \Theta_X, \quad \delta(f)=f\]
\[\text{sp}(\delta)=(w_1,\dots,w_n)\in \mathbb{Q}^n_+.\]
\end{cor}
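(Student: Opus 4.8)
The plan is to derive this as a direct consequence of Theorem \ref{log-pd}, treating the two implications separately. For the forward direction, suppose $(f,X)$ is relatively quasihomogeneous. By Definition \ref{def-rel-quasi-1} there is a coordinate system $x=(x_1,\dots,x_n)$, weights $w\in\Q_+^n$, and generators $h_1,\dots,h_m$ of $I_X$ with $f$ weighted homogeneous of degree $1$ and each $h_i$ weighted homogeneous of degree $d_i$. Then the Euler vector field $\delta=\chi_w=\sum_i w_ix_i\partial_{x_i}$ is semisimple, vanishes at the origin, has $\text{sp}(\delta)=w\in\Q_+^n$, satisfies $\delta(f)=f$, and $\delta(h_i)=d_ih_i\in I_X$ so that $\delta(I_X)\subseteq I_X$, i.e.\ $\delta\in\Theta_X$. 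This is exactly the asserted vector field.

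For the converse, suppose there exists $\delta\in\Theta_X$ with $\delta(0)=0$, $\delta(f)=f$, and $\text{sp}(\delta)=(w_1,\dots,w_n)\in\Q_+^n$. First pass to the formal category: by Theorem \ref{log-pd}(a) there is a formal change of coordinates in which $\delta=\delta_S+\delta_N$ with $\delta_S=\sum_i w_ix_i\partial_{x_i}$ semisimple, $\delta_N$ nilpotent, and $[\delta_S,\delta_N]=0$. By part (b), $\delta(f)=f$ gives $\delta_S(f)=f$, so $f$ is weighted homogeneous of degree $1$ with respect to $w$ in these coordinates. By part (c), $\delta\in\Theta_X$, i.e.\ $\delta(I_X)\subseteq I_X$, gives $\delta_S(\widehat{I_X})\subseteq\widehat{I_X}$ for the ideal generated in $\widehat{\O}_n$. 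By part (d), $\widehat{I_X}$ admits a system of generators $h_1,\dots,h_m$ that are weighted homogeneous for $\delta_S$, say of complex degrees $d_i$; but here the weights $w_i$ are already in $\Q_+$, so the $d_i$ lie in $\Q$ automatically (each $d_i$ is an $\N$-linear combination of the $w_i$ appearing in the support of $h_i$), and the need for part (e) is only in the general complex-weight situation. Thus, in these formal coordinates, $f$ and a generating set of $\widehat{I_X}$ are all weighted homogeneous with respect to the positive rational weights $w$.

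It remains to descend from the formal to the analytic/convergent category, i.e.\ to produce an \emph{honest} coordinate system (not merely a formal one) realising the relative quasihomogeneity, and to argue that $\widehat{I_X}$ being weighted homogeneous forces $I_X$ itself to admit weighted homogeneous generators. The standard tool is Artin's approximation theorem (or, when $w$ lies in the Poincar\'e domain, the analyticity of the Poincar\'e--Dulac normalisation as in the Remark following Theorem \ref{log-pd}): the existence of a formal equivalence making a finite jet of the relevant data weighted homogeneous is an analytic condition, so one may replace the formal coordinate change by a convergent one; since $f$ is a polynomial (finite determinacy of a quasihomogeneous germ with positive weights) and $\widehat{I_X}\cap\O_n=I_X$, the weighted homogeneous generators obtained in $\widehat{\O}_n$ can be taken in $\O_n$. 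I expect this formal-to-analytic passage — making precise that a formal relative-quasihomogenisation upgrades to an analytic one — to be the only genuinely delicate point; everything else is a bookkeeping assembly of the five parts of Theorem \ref{log-pd}.
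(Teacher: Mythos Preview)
Your approach is essentially the same as the paper's: apply parts (a)--(d) of Theorem \ref{log-pd} to extract the semisimple part $\delta_S$, conclude $\delta_S(f)=f$ and $\delta_S\in\Theta_X$, and then use (d) to obtain weighted homogeneous generators of $I_X$. The one place you diverge is the formal-to-analytic passage, which you flag as ``the only genuinely delicate point'' and propose to handle via Artin approximation. The paper dispatches this much more simply: since the eigenvalues $w_i$ are all \emph{positive} rationals, they automatically lie in the Poincar\'e domain (the convex hull of the eigenvalues in $\C$ does not contain $0$), and hence, by the Remark following Theorem \ref{log-pd}, the Poincar\'e--Dulac normalisation and parts (a)--(d) hold directly in the analytic category. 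You actually mention the Poincar\'e domain parenthetically as one possible route, but treat it as a special case rather than noticing that it is guaranteed by the hypothesis $\text{sp}(\delta)\in\Q_+^n$. With this observation the entire formal detour, the appeal to Artin approximation, and the discussion of $\widehat{I_X}\cap\O_n$ become unnecessary, and the argument reduces to the short analytic computation in the paper.
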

\begin{proof}
Since the eigenvalues of $\delta$ are positive, all parts (a)-(d) in Theorem \ref{log-pd} hold in the analytic category as well. Thus, passing to Poincar\'e-Dulac coordinates $\delta=\delta_S+\delta_N$,  where $\delta_S=\sum_{i=1}^nw_ix_i\partial_{x_i}$, we obtain
\[\delta_S\in \Theta_X, \quad \delta_S(f)=f\]
which is what we wanted to prove. Indeed, by part (d) of Theorem \ref{log-pd} we can find a system of quasihomogeneous generators $\langle h_1,\dots,h_m\rangle=I_X$ of the ideal of functions vanishing on $X$,  so that
\begin{align*}
\delta_S(f)&=f \quad \text{and}\\
\delta_S(h_i)&=d_ih_i, \hspace{0.2cm} d_i\in \Q_+, \hspace{0.2cm} i=1,\dots,m
\end{align*} 
which is equivalent to the relative quasihomogeneity of the pair $(f,X)$ in terms of Definition \ref{def-rel-quasi-1}.
\end{proof}

\section{Relative Saito Theorems}
\label{sec:4}

In this section we prove Theorem \ref{thm-main}. Throughout the section we denote by $\m\subset \O_n$ the maximal ideal of the ring of holomorphic function germs. We start with Part (a) of Theorem \ref{thm-main} with a more precise statement on the range of the weights.

\begin{thm}
\label{thm-main1}
Let $(f,X)$ be a pair with $\mu_X(f)<\infty$, where $X\subset (\C^n,0)$ is an arbitrary analytic set. Suppose also that $f\in \m^3$. Then, condition $\mu_X(f)=\tau_X(f)$ implies the relative quasihomogeneity of the pair $(f,X)$ with respect to a system of weights $w\in ((0,\frac{1}{2})\cap \Q)^n$.
\end{thm}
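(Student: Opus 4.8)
The plan is to run Saito's argument, but feeding it the Euler field produced by Saito's theorem for $f$ alone, and then \emph{correcting} this Euler field into a logarithmic vector field of $X$ without altering its linear part; the hypothesis $f\in\m^3$ is precisely what makes this correction harmless.

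First I would apply Corollary~\ref{caract1mutau}, which turns $\mu_X(f)=\tau_X(f)$ into the two conditions $\mu(f)=\tau(f)$ and $\Theta_Y=(\Theta_X\cap\Theta_Y)+H_Y$, where $Y=f^{-1}(0)$. From $\mu(f)=\tau(f)$ and Saito's theorem \cite{Sa1} we may fix coordinates and weights $w=(w_1,\dots,w_n)\in\Q^n_+$ in which $f=\sum_{\langle w,m\rangle=1}a_mx^m$, so that the Euler field $\chi_w=\sum_iw_ix_i\partial_{x_i}$ satisfies $\chi_w(f)=f$; in particular $\chi_w\in\Theta_Y$. Using the module equality above, write $\chi_w=\xi+\eta$ with $\xi\in\Theta_X\cap\Theta_Y$ and $\eta\in H_Y=\ker df$. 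Since $df(\eta)=0$ we get $\xi(f)=\chi_w(f)-\eta(f)=f$, so $\xi$ is a logarithmic vector field of $X$ having $f$ as an eigenfunction with eigenvalue $1$.

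Next I would compute the $1$-jet of $\xi$. As $f\in\m^3$, each $\partial_{x_i}f$ lies in $\m^2$, hence every generator $\eta_{ij}=\partial_{x_i}f\,\partial_{x_j}-\partial_{x_j}f\,\partial_{x_i}$ of $H_Y$ has all its coefficients in $\m^2$, and therefore so does every $\eta\in H_Y$; thus $j^1\eta=0$. Consequently $j^1\xi=j^1\chi_w=\chi_w$, so $\xi(0)=0$ and $\text{sp}(\xi)=w\in\Q^n_+$. By Corollary~\ref{cor-pd} the pair $(f,X)$ is then relatively quasihomogeneous, with respect to the weights $w=\text{sp}(\xi)$. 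To sharpen the range of these weights, note that $\mu(f)\leq\mu_X(f)<\infty$, so $f$ has an isolated singularity, and in the Saito coordinates it is a quasihomogeneous polynomial of degree $1$ with weights $w$ and multiplicity $\geq3$. A quasihomogeneous polynomial with an isolated singularity must contain, for each variable $x_i$, a monomial of the form $x_i^{a_i}$ or $x_i^{a_i}x_j$ with $j\neq i$ (otherwise the $x_i$-axis would lie in $\Sing(f)$); since $f\in\m^3$, this forces $a_i\geq3$ in the first case, giving $w_i=1/a_i\leq 1/3$, and $a_i\geq2$ in the second, where $w_j>0$ gives $a_iw_i<1$ and hence $w_i<1/a_i\leq 1/2$. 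Either way $w_i<\tfrac{1}{2}$, so $w\in((0,\tfrac{1}{2})\cap\Q)^n$, as required.

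The one genuine obstacle is the control of $j^1\xi$: in general a Hamiltonian correction term $\eta\in H_Y$ may carry a nonzero linear part and thereby destroy the positivity, the rationality, or even the non-vanishing of the eigenvalues of $\xi$ — this is exactly the mechanism behind counter-examples of the type of Example~\ref{counter}, where $f$ is a linear form. The hypothesis $f\in\m^3$ rules this out, after which the logarithmic Poincar\'e--Dulac theorem (Theorem~\ref{log-pd}, through Corollary~\ref{cor-pd}) does the remaining work.
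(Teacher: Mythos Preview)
Your proof is correct and follows essentially the same route as the paper's: apply Corollary~\ref{caract1mutau}, invoke Saito's theorem to get the Euler field $\chi_w$, use the decomposition $\Theta_Y=(\Theta_X\cap\Theta_Y)+H_Y$ to correct $\chi_w$ by a Hamiltonian term, observe that $f\in\m^3$ forces $j^1\eta=0$ so the spectrum is preserved, and conclude via Corollary~\ref{cor-pd}. The only difference is cosmetic: the paper cites \cite[Satz~1.3]{Sa1} for the bound $w_i<\tfrac12$, whereas you supply the standard monomial argument explicitly.
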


\begin{proof}
From condition (\ref{mutau2}) in Corollary \ref{caract1mutau} and Saito's theorem \cite[Satz 4.1]{Sa1} applied to $f$, we know that there exists a coordinate system and an Euler vector field $\chi_w \in \Theta_Y$, $\chi_w(f)=f$, with positive rational eigenvalues $w\in \Q^n_+$. In fact, since $f\in \m^3$ we can choose the coordinates such that $w\in ((0,1/2)\cap \mathbb{Q})^n$ (see \cite[Satz 1.3]{Sa1}). Suppose that $\chi_w$ is not tangent to $X$ in these coordinates (or else there is nothing to prove). It follows then by the relation (\ref{thetay}) in Corollary \ref{caract1mutau} that there exists a Hamiltonian vector field $\eta \in H_Y$ of $f$, such that the new vector field $\delta=\chi_w-\eta$ is tangent to $X$, and remains Euler for $f$:
\[\delta=\chi_w-\eta \in \Theta_X\cap \Theta_Y, \quad \delta(f)=\chi_w(f)=f.\]
Denote now by $\text{sp}(\delta)=\text{sp}(\chi_w-\eta)=w'$ the eigenvalues of $\delta$. Since $f\in \m^3$ we have that $j^1\eta=0$ for any $\eta \in H_Y$ (i.e. all Hamiltonian vector fields are non-linear), and thus we obtain the following equality of the corresponding eigenvalues:
\[w'=w\in ((0,\frac{1}{2})\cap \mathbb{Q})^n.\]
The proof is concluded now by Corollary \ref{cor-pd}.
\end{proof}

\begin{rem}
\label{AlekKer}
An alternative proof of Theorem \ref{thm-main1} can be given without using the decomposition $\Theta_Y=\Theta_X\cap \Theta_Y+H_Y$ obtained by the algebraic formula $\overline{\mu}_X(f)=\overline{\tau}_X(f)$, but using instead Alexandrov-Kersken's theorem \cite{Alek}, \cite{Ker}, according to which (c.f. \cite[Proposition 1.2]{Wahl1})
\[\Theta_Y=\langle \chi_w\rangle+H_Y,\]
in appropriate quasihomogeneous coordinates of $f$, where $\chi_w$ is an Euler vector field for $f$, with positive rational eigenvalues $w=(w_1,\dots,w_n)\in ((0,1/2)\cap \mathbb{Q})^n$. Then, since $\mu_X(f)=\tau_X(f)$ implies by definition that there exists $\delta \in \Theta_X\cap \Theta_Y\subset \Theta_Y$, with $\delta(f)=f$, it follows that
\[\delta=\chi_w+\eta\]
for some $\eta \in H_Y$. The rest of the proof follows exactly the same lines.
\end{rem}

Let us prove now Part (b) of Theorem \ref{thm-main} under a more precise statement for the weights.

\begin{thm}
\label{thm-main2}
Let $(f,X)$ be a pair with $\mu_X(f)<\infty$, where $X=h^{-1}(0)\subset (\C^n,0)$ is a hypersurface having at most an isolated singularity at the origin. Then, condition $\mu_X(f)=\tau_X(f)$ implies the relative quasihomogeneity of the pair $(f,X)$ with respect to a system of weights $w\in ((0,1]\cap \Q)^n$.
\end{thm}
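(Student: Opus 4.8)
The idea is to reduce the hypersurface case to the ICIS case and then exploit Vosegaard's theorem together with the $1$-parameter smoothing picture developed in Section \ref{sec:2}. Assume $\mu_X(f)=\tau_X(f)$. By Corollary \ref{caract1mutau} this gives $\mu(f)=\tau(f)$, so by Saito's theorem \cite{Sa1} we may already choose coordinates in which $f$ is quasihomogeneous with positive rational weights $w\in((0,1]\cap\Q)^n$ (the bound $1$ occurring only in the smooth/linear case $\mu(f)=0$, otherwise $w\in((0,\frac12]\cap\Q)^n$). We must upgrade this to a \emph{common} Euler vector field lying in $\Theta_X\cap\Theta_Y$. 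The main structural input is Remark \ref{Kourrem}: since $X$ has an isolated singularity we may write $X=h^{-1}(0)$ with $h$ having an isolated singularity; but we are \emph{not} yet entitled to assume $\mu(h)=\tau(h)$, so we first need to establish that. For this use Corollary \ref{char2} — choosing a suitable $1$-parameter smoothing $h$ of the \icis{} $X\cap Y=(f,h)^{-1}(0)$ so that $\mu_Y(h)<\infty$ as well — which yields $\mu(h)=\tau(h)$ and $\mu(f,h)=\overline{\tau}(f,h)$. In particular $h$ is quasihomogeneous in \emph{some} coordinate system, and the \icis{} $(f,h)$ satisfies $\mu(f,h)=\overline{\tau}(f,h)$.

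\textbf{Key steps.} First, by Corollary \ref{char2} record the three equalities $\mu(f)=\tau(f)$, $\mu(h)=\tau(h)$, $\mu(f,h)=\overline{\tau}(f,h)$, and note (as in Remark \ref{conjbarratau}) that the first two combined with $\mu(f,h)=\overline{\tau}(f,h)$ should force $\mu(f,h)=\tau(f,h)$; indeed once $\mu(h)=\tau(h)$ holds, Remark \ref{Kourrem} gives the L\^e--Greuel identity $\mu_X(f)=\mu(f)+\mu(f,h)$ and the explicit presentation $df(\Theta_X)=\langle f\rangle+J(f,h)$. Second, apply Vosegaard's theorem \cite{Vo} to the \icis{} $(f,h)$: from $\mu(f,h)=\tau(f,h)$ we obtain a coordinate system $x=(x_1,\dots,x_n)$ and positive rational weights $v\in\Q_+^n$ in which \emph{both} $f$ and $h$ are quasihomogeneous — equivalently an Euler field $\chi_v=\sum v_ix_i\partial_{x_i}$ with $\chi_v(f)=c_ff$, $\chi_v(h)=c_hh$ for positive rationals $c_f,c_h$; rescaling $v$ we may take $c_f=1$. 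Third, this $\chi_v$ lies in $\Theta_X\cap\Theta_Y$ and is Euler for $f$, so Corollary \ref{cor-pd} immediately yields the relative quasihomogeneity of $(f,X)$. Fourth, to pin down the range of the weights, one combines the constraint that $f$ is quasihomogeneous of degree $1$ with respect to $v$ — which for $f\in\m^2$ forces $v_i\le\frac12$ on the variables genuinely appearing — with the edge case $\mu(f)=0$ where $f$ may be taken to be a coordinate function and the weight of that variable is $1$; in all cases $v\in((0,1]\cap\Q)^n$, and one may harmlessly assign weight $1$ (or any admissible positive rational) to any variable not appearing in $f$ or the chosen generator of $I_X$.

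\textbf{Main obstacle.} The delicate point is the passage $\mu(f,h)=\overline{\tau}(f,h)\ \Rightarrow\ \mu(f,h)=\tau(f,h)$, i.e. identifying the Bruce--Roberts quantity $\overline{\tau}(f,h)$ with the genuine Tjurina number of the \icis{}; in general $\overline{\tau}(f,h)\le\tau(f,h)$ (the content of the conjecture in Remark \ref{conjbarratau}) and only the reverse inequality after imposing quasihomogeneity is elementary. The honest route is to avoid this: use Remark \ref{Kourrem} directly. Once $\mu(h)=\tau(h)$ is known (from Corollary \ref{char2}, which only needs $\mu_X(f)=\tau_X(f)$ together with the symmetry \eqref{symBRDNQ}), Remark \ref{Kourrem} gives both $\mu_X(f)=\mu(f)+\mu(f,h)$ and $df(\Theta_X)=\langle f\rangle+J(f,h)$; feeding the first back into \eqref{taubarra} and the hypothesis $\mu_X(f)=\tau_X(f)$ forces $\mu(f,h)=\tau(f,h)$ directly, bypassing $\overline{\tau}$ entirely. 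The remaining work is then bookkeeping: invoking Vosegaard and Corollary \ref{cor-pd}, and tracking weights. A secondary subtlety is ensuring the coordinate system produced by Vosegaard is compatible with (or can be chosen to coincide with) one in which $h$ generates $I_X$ as a quasihomogeneous element — but since $X$ is a hypersurface, $I_X=\langle h\rangle$ automatically and any quasihomogeneous $h$ does the job, so this causes no difficulty.
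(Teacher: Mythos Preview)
There is a genuine gap, and it lies in your appeal to Vosegaard. Even granting $\mu(f,h)=\tau(f,h)$, Vosegaard's theorem only tells you that the \emph{ideal} $\langle f,h\rangle$ admits a system of quasihomogeneous generators in some coordinate system --- equivalently, that there is an Euler field $\chi_v\in\Theta_{X\cap Y}$ with positive rational eigenvalues, i.e.\ $\chi_v(\langle f,h\rangle)\subseteq\langle f,h\rangle$. It does \emph{not} say that the particular generators $f$ and $h$ become quasihomogeneous, i.e.\ that $\chi_v(f)\in\langle f\rangle$ and $\chi_v(h)\in\langle h\rangle$ separately; in other words you get $\chi_v\in\Theta_{X\cap Y}$, not $\chi_v\in\Theta_X\cap\Theta_Y$, and the latter is what Corollary~\ref{cor-pd} needs. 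The paper's own Example after Remark~\ref{conjbarratau} shows this distinction is real: there $\mu(f,h)=\tau(f,h)=10$, so the {\textsc{icis}} $X\cap Y$ is quasihomogeneous by Vosegaard, yet $\mu_X(f)-\tau_X(f)=1$, so $(f,X)$ is \emph{not} relatively quasihomogeneous. Your ``second step'' therefore simply fails. (There is a secondary issue as well: your route to $\mu(f,h)=\tau(f,h)$ does not actually reach $\tau(f,h)$. Feeding $\mu_X(f)=\mu(f)+\mu(f,h)$ into \eqref{taubarra} only reproduces $\mu(f,h)=\overline{\tau}(f,h)$, and if you use $df(\Theta_X)=\langle f\rangle+J(f,h)$ you find $\tau_X(f)=\dim\O_n/(\langle f\rangle+J(f,h))=\mu(f)+\mu(f,h)=\mu_X(f)$ tautologically --- $\tau(f,h)$ never appears.)

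The paper avoids the {\textsc{icis}} route entirely. It uses relation \eqref{thetay}, i.e.\ $\Theta_Y=(\Theta_X\cap\Theta_Y)+H_Y$, to correct the Saito Euler field $\chi_w$ of $f$ by a Hamiltonian $\eta\in H_Y$, obtaining $\delta=\chi_w-\eta\in\Theta_X\cap\Theta_Y$ with $\delta(f)=f$ \emph{directly}. The real work is then a case analysis on the multiplicities of $f$ and $h$ to show that $\text{sp}(\delta)$ can be taken positive: for $f\in\m^3$ this is Theorem~\ref{thm-main1}; for $f\in\m^2\setminus\m^3$ one argues by contradiction that non-positive eigenvalues force a normal form in which $V(df(\Theta_X))$ is positive-dimensional, contradicting $\mu_X(f)<\infty$; the remaining smooth cases ($f$ or $h$ in $\m\setminus\m^2$) are handled by a direct splitting argument.
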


\begin{proof}

We split the proof in the following cases, with respect to the multiplicity of $h$.

\medskip

\noindent (1) Case $h\in \m \setminus \m^2$.

\medskip 

\noindent In this case $X=h^{-1}(0)$ is a smooth hypersurface so we can always choose coordinates such that it is weighted homogeneous for any weight system; in particular:
\[h(x,y)=x,\]
so that any vector field $\delta=w_1x\partial_x+\eta$, $\eta \in H_X=\langle \partial_{y_1},\dots,\partial_{y_n}\rangle$ is an Euler vector field for $h$:
\[\delta(h)=w_1h, \quad w_1\in \Q.\]
In these coordinates the function $f$ has the form:
\[f(x,y)=xf_1(x,y)+f_0(y),\]
where either both $f$ and $f_0=f|_{X}$ have an isolated singularity at the origin, or $f$ is regular ($\partial_xf(0)=f_1(0)\ne 0$) and $f_0=f|_{X}$ has an isolated singularity at the origin. It is easy to see now that condition $\mu_X(f)=\tau_X(f)$ implies $\mu(f)=\tau(f)$ and $\mu(f_0)=\tau(f_0)$. Thus, by Saito's theorem applied to $f_0\in \m^2$, we can suppose that after a change of $y$-variables, the function $f_0$ is already a quasihomogeneous polynomial with respect to the weights $w'=(w_2,\dots,w_n)\in ((0,1/2]\cap \mathbb{Q})^{n-1}$, and of quasihomogeneous degree $1$ (see again \cite[Satz 1.3]{Sa1} for the range of the weights). To finish the proof it suffices to show that the weight $w_1 \in \mathbb{Q}$ can be also chosen positive. Indeed, since $f\in \m$ has at most an isolated singularity at the origin, three cases may appear:
\begin{itemize}
\item[(1.a)] the monomial $x$ appears also in the expansion of $f$ and then $w_1=1$, or
\item[(1.b)] the monomial $x^k$, $k\geq 2$, appears in the expansion of $f$, and then $w_1=1/k\in (0,1/2]\cap \mathbb{Q}$, or
\item[(1.c)] a monomial of the form $x^ky_j$ appears in the expansion of $f$ for some $j$ and $k\geq 1$, and then $w_1=\frac{1-w_j}{k} \in (0,1)\cap \mathbb{Q}$.
\end{itemize}

\medskip

\noindent (2) Case $h\in \m^2$.

\medskip

In this case $X=h^{-1}(0)$ also has an isolated singularity at the origin.  The proof splits into the following sub-cases with respect to the multiplicity of $f$.

\medskip

 (2.1) Sub-case $f\in \m^3$. This reduces to Theorem \ref{thm-main1} and there is nothing to prove.

\medskip

(2.2) Sub-case $f\in \m^2 \setminus \m^3$. Working as in the proof of Theorem \ref{thm-main1} we have by Saito's theorem applied to $f$ that there exist coordinates and an Euler vector field $\chi_w\in \Theta_Y$, $\chi_w(f)=f$, with positive rational eigenvalues $w\in ((0,1/2]\cap \mathbb{Q})^n$ (see again \cite[Satz 1.3]{Sa1}). Suppose that in these coordinates the vector field  $\chi_w$ is not tangent to $X$ (else there is nothing to prove).  Then, by relation (\ref{thetay}) in Corollary \ref{caract1mutau} again, there exists  a Hamiltonian vector field $\eta \in H_Y$ for $f$ such that the new vector field $\delta=\chi_w-\eta$ is tangent to $X$ and remains Euler for $f$
\[\delta=\chi_w-\eta \in \Theta_X\cap \Theta_Y, \quad \delta(f)=\chi_w(f)=f.\]
 Let $\text{sp}(\delta)=\text{sp}(\chi_w-\eta)=w'$ be the eigenvalues of $\delta$. Since $f\in \m^2\setminus \m^3$, we have in general $j^1\eta \neq 0$ for $\eta \in H_Y$ (i.e. the set of Hamiltonian vector fields with non-zero linear part is non-empty), and so we may suppose that $w'\neq w$ (else there is again nothing to prove). By Corollary \ref{cor-pd}, it suffices to show that the weights $w'$ can be chosen positive, and in particular $w'\in ((0,1)\cap \mathbb{Q})^n$.  We argue by contradiction by proving the following

\begin{clm}
If in the situation above there exist $k\geq 1$ weights which are non-positive, say $w'_i\leq 0$, $i=1,\dots,k$, then $\mu_X(f)=\infty$.
\end{clm}
\begin{proof}
Indeed, since $f$ has isolated singularity, we have by \cite[Korollar 1.9]{Sa1} that there exist $k$ more weights which are greater or equal to $1$, $w'_{k+i}\geq 1$, $i=1,\dots,k$, and the rest $(n-2k)$-weights belong in the open unit interval $w'_{2k+j}\in (0,1)$, $j=1,\dots,n-2k$, and are situated symmetrically with respect to $1/2$. In particular, working as in the proof of \cite[Lemma 1.10]{Sa1}, we can find a system of weighted homogeneous coordinates $(x,y,z):=(x_1,\dots,x_k,y_1,\dots,y_k,z_1,\dots,z_{n-2k})$ of corresponding degrees
\begin{align*}
\deg_{w'}x_i &=w'_i,\quad i=1,\dots, k\\
\deg_{w'}y_i&=1-w'_i,\quad i=1,\dots,k\\
\deg_{w'}z_j&=w'_{2k+j},\quad j=1,\dots,n-2k
\end{align*}
such that:
\[f(x,y,z)=\sum_{i=1}^kx_iy_i+f_0(z),\]
is weighted homogeneous with respect to the weights:
\[w'=(w'_1,\dots,w'_k,1-w'_1,\dots,1-w'_k,w'_{2k+1},\dots,w'_{n}),\]
of weighted degree $1$, where $f_0\in \frak{m}^2$ is a quasihomogeneous polynomial with respect to the weights $(w'_{2k+1},\dots,w'_n)\in ((0,1)\cap \Q)^{n-2k}$. By Theorem \ref{log-pd} (d), we can choose a weighted homogeneous equation $h=h(x,y,z)$ of the hypersurface $X=h^{-1}(0)$, of weighted degree $\deg_{w'}h=d$.
 Since $h$ also has an isolated singularity at the origin, $d>0$, or else $h\in \langle x\rangle$ will have non-isolated singularity, unless $2k=n$. So, let us suppose first that $2k<n$. Then $h\in \langle y,z\rangle$, so we can write
\[h(x,y,z)=\sum_{i=1}^ky_ih_i(x,y,z)+h_0(z)\]
where the functions $h_i$ and $h_0$ are all weighted homogeneous of degrees $\deg_{w'}h_i=d-1+w'_i$, $\deg_{w'}h_0=d$.
By the fact that $h$ has isolated singularity we see that $h_0\in \m^2$ also has an isolated singularity at the origin $z=0$, and also
\[j^1h_i(x,0,0)=\sum_{j=1}^k\lambda_{ij}x_j, \quad \lambda_{ij}\in \mathbb{C}, \quad i,j=1,\dots,k\]
so that
\[j^2h(x,y,0)=\sum_{i,j=1}^k\lambda_{ij}x_iy_j\]
is a non-degenerate quadratic form. Thus $d=1$.
It follows from this that in the presence of negative weights, the pair $(f,X=h^{-1}(0))$ is reduced to a weighted homogeneous pair necessarily of the same degree $d=1$
\[f(x,y,z)=\sum_{i=1}^kx_iy_i+f_0(z)\]
\[h(x,y,z)=\sum_{i=1}^ky_ih_i(x,y,z)+h_0(z)\]
with $j^1h_i(x,0,0)=\sum_{j=1}^k\lambda_{ij}x_j$.
We can now readily verify that $\mu_X(f)=\infty$, which gives a contradiction.
Indeed, this follows by direct computation of the zero locus of the relative Jacobian ideal which, by Remark \ref{Kourrem}, becomes
\[df(\Theta_X)=\langle f\rangle +J(f,h)\]
where $J(f,h)$ is the ideal generated by the $2\times 2$ minors of the Jacobian matrix of the {\textsc{icis}} $(f,h)$. Then
\[V(df(\Theta_X))=V(\langle f\rangle+J(f,h))=\]
\[=\{y=z=x_ih_j(x,0,0)-x_jh_i(x,0,0)=0, \quad 1\leq i<j\leq k\}\]
which is a determinantal variety defining a curve in the $y=z=0$ space. This proves the claim for the case $2k<n$. Now, if we suppose that $2k=n$ (i.e. that the $z$-variables are missing) and $h\in \langle x\rangle$ is weighted homogeneous of degree $d<0$, then by exactly the same arguments as above, we can reduce the pair $(f,X)$ to the preliminary normal form:
\[f(x,y)=\sum_{i=1}^kx_iy_i\]
\[h(x,y)=\sum_{i=1}^kx_ih_i(x,y)\]
with $j^1h_i(0,y)=\sum_{j=1}^k\lambda_{ij}y_j$, so that 
\[j^2h(x,y)=\sum_{i,j=1}^k\lambda_{ij}x_iy_j\]
is a non-degenerate quadratic form involving only cross-product terms. From this it follows immediately that $\mu_X(f)=\infty$, since
\[V(df(\Theta_X))=\{x=y_ih_j(0,y)-y_jh_i(0,y)=0, \quad 1\leq i<j\leq k\}\]
defines again a curve in the $x=0$ plane. This proves the claim for the case $2k=n$ as well.   
\end{proof}

\medskip

(2.3) Sub-case $f\in \m\setminus \m^2$. This is exactly the same with the case $h\in \m \setminus \m^2$ in the beginning of the proof of the theorem, after interchanging the roles of $f$ and $h$. Indeed, we choose coordinates such that:
\[f(x,y)=x,\]
is weighted homogeneous for any weight system. In these coordinates we can write the function $h$ defining $X=h^{-1}(0)$ as:
\[h(x,y)=xh_1(x,y)+h_0(y).\]
Since $\mu_Y(h)<\infty$ (where $Y=f^{-1}(0)$) we obtain that either both $h$ and $h_0=f|_{Y}$ have an isolated singularity at the origin, or $h$ is regular ($\partial_xh(0)=h_1(0)\ne 0$) and $h_0=h|_{Y}$ has an isolated singularity at the origin. By  Proposition \ref{lemaB} (or equivalently by Corollary \ref{char2}) condition $\mu_X(f)=\tau_X(f)$ is equivalent to $\mu_Y(h)=\tau_Y(h)$ and the latter implies $\mu(h)=\tau(h)$ and $\mu(h_0)=\tau(h_0)$. Thus, by Saito's theorem applied to $h_0 \in \m^2$, we can suppose that after a change of $y$-variables, the function $h_0$ is already a quasihomogeneous polynomial with respect to the weights $w'=(w_2,\dots,w_n)\in ((0,1/2]\cap \mathbb{Q})^{n-1}$, of quasihomogeneous degree $1$. Now we show in the same way as before that the weight $w_1 \in \mathbb{Q}$ can also be chosen positive, namely:
\begin{itemize}
\item[(2.3.a)] either the monomial $x$ appears also in the expansion of $h$ and then $w_1=1$, or
\item[(2.3.b)] the monomial $x^k$, $k\geq 2$, appears in the expansion of $h$, and then $w_1=1/k\in (0,1/2]\cap \mathbb{Q}$, or
\item[(2.3.c)] a monomial of the form $x^ky_j$ appears in the expansion of $h$ for some $j$ and $k\geq 1$, and then $w_1=\frac{1-w_j}{k} \in (0,1)\cap \mathbb{Q}$.
\end{itemize}
This finishes the proof of the theorem.
\end{proof}

\end{document}